\author{Guillaume Blanc\thanks{École Polytechnique Fédérale de Lausanne, \href{mailto:guillaume.blanc@epfl.ch}{guillaume.blanc@epfl.ch}; \url{https://sites.google.com/view/guillaume-blanc-math}} \and Alice Contat\thanks{Université Sorbonne Paris Nord, \href{mailto:alice.contat@math.cnrs.fr}{alice.contat@math.cnrs.fr}; \url{https://sites.google.com/view/acontat/}}}
\title{Blow-up rate of solution to generalised Blasius equation}
\date{}
\begin{document}

\theoremstyle{plain}
\newtheorem{thm}{Theorem}
\newtheorem{prop}{Proposition}
\newtheorem{lem}{Lemma}
\newtheorem{cor}{Corollary}
\newtheorem{claim}{Claim}

\theoremstyle{remark}
\newtheorem{rem}{Remark}

\maketitle

\begin{abstract}
We identify the blow-up rate of a solution to a generalised Blasius equation, that we came across while studying a probabilistic model of ``Poissonian burning'' in Euclidean space.
% Highlighting bounded quantities along the trajectory, our proof is based on a suitable time change,  which reveals a Lotka--Volterra system. We then show that this Lotka--Volterra system is \emph{permanent} and has only one stable points with all positive coordinated, which allows us to conclude for the blow-up rate.
Our proof involves the study of the long-time behaviour of solutions to a Lotka--Volterra system.
\end{abstract}

\section*{Introduction and main results}

In this paper, we consider the equation
\begin{equation}\label{eq:cauchy}
\begin{cases}
y^{(d+1)}(t)=y(t)\cdot y^{(d)}(t),\quad t\in\mathbb{R}_+,\\\
\text{$y(0),\dots,y^{(d-1)}(0)=0$ and $y^{(d)}(0)=1$,}
\end{cases}
\end{equation}
where $d\in\mathbb{N}^*$ is a fixed parameter.
By the Cauchy--Lipschitz theorem, the Cauchy problem \eqref{eq:cauchy} admits a unique maximal solution $y:{[0,T[}\rightarrow\mathbb{R}$, where $T>0$, and we have either $T=\infty$ (the solution is global), or $T<\infty$ and $y(t)\rightarrow\infty$ as $t\to T^-$ (the solution blows up in finite time).
In fact, for $d=1$, the maximal solution of \eqref{eq:cauchy} is given explicitly by
\[y:t\in\left[0,\frac{\pi}{\sqrt{2}}\right[\longmapsto\sqrt{2}\cdot\tan\left(\frac{t}{\sqrt{2}}\right).\]
In particular, this solution blows up in finite time $T=\left.\pi\middle/\sqrt{2}\right.$, and its blow-up rate is given by
\[y(t)\sim2\cdot\frac{1}{T-t}\quad\text{as $t\to T^-$.}\]
For $d=2$, Equation \eqref{eq:cauchy} comes up in combinatorics, in counting unordered trilabelled increasing trees \cite{kubapanholzer}.
As noticed by Kuba and Panholzer, the maximal solution $y:{[0,T[}\rightarrow\mathbb{R}$ of~\eqref{eq:cauchy} can be related to the solution of the Blasius equation from boundary layer theory:
\begin{equation}\label{eq:blasius}
\begin{cases}
z'''(t)+z(t)\cdot z''(t)=0,&t\in\mathbb{R}_+,\\
\text{$z(0),z'(0)=0$ and $\lim_{t\to\infty}z'(t)=1$.}&
\end{cases}
\end{equation}
The Blasius equation has been treated by several authors in the mathematics literature, including Weyl \cite{weyl}, who first rigorously proved the existence of a solution to \eqref{eq:blasius} by considering the Cauchy problem
\begin{equation}\label{eq:weyl}
\begin{cases}
z'''(t)+z(t)\cdot z''(t)=0,\quad t\in\mathbb{R}_+,\\\
\text{$z(0),z'(0)=0$ and $z''(0)=1$.}
\end{cases}
\end{equation}
In particular, he gave bounds on the radius of convergence of the power series expansion at $t=0$ of the maximal solution to \eqref{eq:weyl}, which in fact corresponds to the explosion time $T$ of the maximal solution $y$ of \eqref{eq:cauchy}.
Interestingly, Weyl also treated the following generalisation of \eqref{eq:weyl}, which is almost \eqref{eq:cauchy}:
\[\begin{cases}
z^{(d+1)}(t)+z(t)\cdot z^{(d)}(t)=0,&t\in\mathbb{R}_+,\\
\text{$z(0),\ldots,z^{(d-1)}(0)=0$ and $z^{(d)}(0)=1$,}
\end{cases}\]
where $d\in\mathbb{N}^*$ is a parameter.
In this paper, we investigate the blow-up of the maximal solution $y:{[0,T[}\rightarrow\mathbb{R}$ to \eqref{eq:cauchy}, for any $d\in\mathbb{N}^*$.

\paragraph{Blow-up rate of $y$.}
Our main results are stated in Theorem \ref{thm:main} below.
To the best of our knowledge, except for the case $d=1$ which is explicitly solvable, such results were only available for $d=2$, and the corresponding proofs do not straightforwardly extend to $d\geq3$.
They were obtained originally by Coppel \cite{coppel}, and revisited more recently by Ishimura and Matsui \cite{ishimuramatsui}.
\begin{thm}\label{thm:main}
Fix $d\in\mathbb{N}^*$, and let $y:{[0,T[}\rightarrow\mathbb{R}$ be the maximal solution of \eqref{eq:cauchy}.
This solution blows up in finite time: we have $T<\infty$, and $y(t)\rightarrow\infty$ as $t\to T^-$.
Moreover, we identify its blow-up rate in the following sense:
\begin{itemize}
\item For $d\in\llbracket1,10\rrbracket$, the following holds: for each $i\in\llbracket0,d\rrbracket$, we have
\begin{equation}\label{eq:goalstrong} y^{(i)}(t)\sim(d+1)\cdot\frac{i!}{(T-t)^{i+1}}\quad\text{as $t\to T^-$.} \end{equation}

\item Regardless of the value of $d$, the following holds: for each $i\in\llbracket0,d\rrbracket$, we have
\[y^{(i)}(t)=\Theta\left(\frac{1}{(T-t)^{i+1}}\right)\quad\text{as $t\to T^-$,}\]
and most importantly, we have
\begin{equation}\label{eq:goal}
\int_0^ty(s)\mathrm{d}s\sim(d+1)\cdot\ln\left(\frac{1}{T-t}\right)\quad\text{as $t\to T^-$.}
\end{equation}
\end{itemize}
\end{thm}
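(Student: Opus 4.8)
The plan is to recast \eqref{eq:cauchy} as an autonomous Lotka--Volterra system whose unique interior equilibrium encodes the conjectured blow-up profile, and to read off the rates from the long-time behaviour of that system. First I would record the elementary facts: by induction on $i$, using the initial conditions and the equation, $y^{(i)}(t)>0$ and $(y^{(i)})'(t)>0$ for every $t\in{]0,T[}$ and $i\in\llbracket0,d\rrbracket$, so that $y^{(d)}\geq1$, $y(t)\geq t^{d}/d!\to\infty$, and hence $y^{(d+1)}=y\,y^{(d)}\to\infty$ and $y^{(d)}\to\infty$ as $t\to T^{-}$. Since $(\ln y^{(d)})'=y$, the function $\sigma(t):=\int_{0}^{t}y$ satisfies $y^{(d)}(t)=e^{\sigma(t)}$, so $\sigma(t)\to\infty$ as $t\to T^{-}$. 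I would then set $r_{i}:=y^{(i+1)}/y^{(i)}$ for $i\in\llbracket0,d-1\rrbracket$ and $r_{d}:=y$; the substitution $(y,\dots,y^{(d)})\mapsto(r_{0},\dots,r_{d-1},y)$ turns \eqref{eq:cauchy} into $r_{i}'=r_{i}(r_{i+1}-r_{i})$ for $i\in\llbracket0,d-1\rrbracket$ and $r_{d}'=r_{0}r_{d}$. Finally, putting $x_{i}:=r_{i}/y$ for $i\in\llbracket0,d-1\rrbracket$, $x_{d}:=1$, and using $\sigma$ as the new time variable, a direct computation gives the Lotka--Volterra system
\begin{equation}\label{eq:LVsketch}
\frac{\mathrm{d}x_{i}}{\mathrm{d}\sigma}=x_{i}\bigl(x_{i+1}-x_{i}-x_{0}\bigr),\qquad i\in\llbracket0,d-1\rrbracket,
\end{equation}
on $\sigma\in{[\sigma_{0},\infty[}$, whose unique interior equilibrium is $x^{\star}=\bigl(\tfrac{1}{d+1},\tfrac{2}{d+1},\dots,\tfrac{d}{d+1}\bigr)$.

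The crux is then the \emph{permanence} of \eqref{eq:LVsketch}: there exist $0<c\leq C$ with $c\leq x_{i}(\sigma)\leq C$ for all $i$ and all large $\sigma$. Granting it, everything in the ``regardless of $d$'' part follows quickly. Indeed $\mathrm{d}y/\mathrm{d}\sigma=y'/y=r_{0}=x_{0}y\geq cy$, so $y$ grows at least exponentially in $\sigma$ and $T=\lim_{\sigma\to\infty}\int_{\sigma_{0}}^{\sigma}\mathrm{d}\sigma'/y(\sigma')<\infty$; moreover $x_{0}=y'/y^{2}=(-1/y)'\in[c,C]$, so integrating from $t$ to $T$ gives $1/y(t)=\Theta(T-t)$, i.e.\ $y=\Theta\bigl((T-t)^{-1}\bigr)$; then $r_{i}=x_{i}y=\Theta\bigl((T-t)^{-1}\bigr)$ and, from $y^{(i)}=y\prod_{j<i}r_{j}$, $y^{(i)}=\Theta\bigl((T-t)^{-(i+1)}\bigr)$ for every $i\in\llbracket0,d\rrbracket$ (in particular $y^{(i)}\to\infty$); finally $\int_{0}^{t}y=\sigma(t)=\ln y^{(d)}(t)=(d+1)\ln\tfrac{1}{T-t}+O(1)\sim(d+1)\ln\tfrac{1}{T-t}$, which is \eqref{eq:goal}.

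To prove permanence, the upper bound is soft: $M(\sigma):=\max_{i}x_{i}(\sigma)$ has nonpositive upper Dini derivative whenever $M\geq1$ (at an index $i\leq d-2$ realising the maximum, $x_{i}'\leq-x_{i}x_{0}<0$; at $i=d-1$ with $x_{d-1}\geq1$, $x_{d-1}'\leq x_{d-1}(1-x_{d-1})\leq0$), so $x_{i}\leq\max\bigl(1,M(\sigma_{0})\bigr)$. The lower bound — that the orbit stays bounded away from the boundary of ${]0,\infty[}^{d}$ — is more delicate; I would use the Lotka--Volterra structure together with the upper bound just obtained (which controls the destabilising term $-x_{0}$ present in every equation) to show the boundary faces are repelling, and then invoke a standard permanence criterion for Lotka--Volterra systems. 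For $d\in\llbracket1,10\rrbracket$ I expect to prove in addition the \emph{convergence} $x(\sigma)\to x^{\star}$, which upgrades \eqref{eq:goal} to the sharp rates: writing \eqref{eq:LVsketch} as $\dot x_{i}=x_{i}(b_{i}+(Bx)_{i})$ and taking the classical Volterra function $V(x)=\sum_{i}\gamma_{i}\bigl(x_{i}-x_{i}^{\star}\ln x_{i}\bigr)$, one computes $\dot V=\tfrac12(x-x^{\star})^{\top}(\Gamma B+B^{\top}\Gamma)(x-x^{\star})$ with $\Gamma=\mathrm{diag}(\gamma_{i})$, so it suffices to find weights $\gamma_{i}>0$ making $\Gamma B+B^{\top}\Gamma$ negative definite — a finite linear-algebra (LMI) feasibility problem that I would verify case by case for $d\leq10$. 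Then $x_{0}=y'/y^{2}\to\tfrac1{d+1}$ gives $1/y(t)\sim(T-t)/(d+1)$, i.e.\ $y(t)\sim(d+1)/(T-t)$, hence $r_{i}=x_{i}y\sim(i+1)/(T-t)$ and $y^{(i)}=y\prod_{j<i}r_{j}\sim(d+1)\,i!\,(T-t)^{-(i+1)}$, which is \eqref{eq:goalstrong}.

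The main obstacle is the analysis of \eqref{eq:LVsketch}: establishing permanence for \emph{every} $d$ (the no-escape-to-the-boundary part is the hard one, because the coupling $-x_{0}$ appears in all $d$ equations), and proving the precise convergence $x\to x^{\star}$ for $d\leq10$ through the Lyapunov/LMI computation — with the expectation that this last computation genuinely degrades for $d\geq11$ (a Hopf-type loss of stability of $x^{\star}$), which is exactly why only $\Theta$-bounds and the logarithmic asymptotic \eqref{eq:goal} are asserted in that range. A secondary, routine point is transferring the $\Theta$- and $\sim$-statements for $x$ in the variable $\sigma$ back to statements for $y^{(i)}$ as $t\to T^{-}$, which uses $T<\infty$, the diffeomorphism $t\mapsto\sigma(t)$, and the elementary fact that $\int_{t}^{T}y'/y^{2}\sim(T-t)\lim_{s\to T^{-}}x_{0}(s)$.
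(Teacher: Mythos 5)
Your reduction is, up to relabelling, exactly the paper's: your $x_i=y^{(i+1)}/(y\,y^{(i)})$ are the paper's $u_{i+1}$, your time variable $\sigma=\int_0^{\cdot}y$ is the composition of the paper's two time changes $\phi$ and $\psi$, and your system is precisely the Lotka--Volterra system \eqref{eq:lotkavolterra}; the interior equilibrium, the Goh/Volterra Lyapunov function with weights found by a finite positive-definiteness check for $d\le10$, and the appeal to a permanence criterion for general $d$ all match the paper (which uses Jansen's criterion via Hofbauer's average Lyapunov function $P=\prod_i w_i$). Two genuine differences are worth noting, both in your favour. First, you get boundedness of the $x_i$ from the dynamics itself (your max-function argument is the paper's Proposition \ref{prop:basic}) rather than from the paper's separate ``rough bounds'' on $y$ and its derivatives (Proposition \ref{prop:roughbounds}), which streamlines the preliminaries. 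Second, and more interestingly, you derive \eqref{eq:goal} directly from permanence: the $\Theta$-bounds give $y^{(d)}(t)=\Theta\bigl((T-t)^{-(d+1)}\bigr)$, and since $\ln y^{(d)}(t)=\int_0^t y$, taking logarithms immediately yields \eqref{eq:goal}. The paper instead deduces \eqref{eq:goal} from the classical averaging statement $\frac1t\int_0^t w_i\to w_i^*$ for permanent Lotka--Volterra systems; your route makes that averaging step unnecessary for the main application.

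Two points need attention. The substantive one is that the permanence step --- the crux of the general-$d$ claim --- is only gestured at (``show the boundary faces are repelling, then invoke a standard criterion''). To make this a proof you must verify the hypothesis of whichever criterion you invoke; with the Hofbauer--Jansen criterion this amounts to checking that at every stationary point on $\partial\bigl(\mathbb{R}_+^d\bigr)$ the quantity $\sum_i(\text{per-capita growth rate})=1-(d+1)x_0$ is strictly positive, which holds because any stationary point with $x_0>0$ is forced to be the interior equilibrium, so boundary stationary points have $x_0=0$. This verification is short, but it is where the work is, and your outline stops just before it. The minor point is your justification that $\sigma(t)\to\infty$ as $t\to T^-$ (needed so that the Lotka--Volterra system lives on an infinite time interval): the chain ``$y(t)\ge t^d/d!\to\infty$, hence $y^{(d)}\to\infty$'' only works if $T=\infty$; when $T<\infty$ you should instead argue that if $\sigma$ were bounded then $y^{(d)}=\mathrm{e}^{\sigma}$, and hence all lower-order derivatives, would be bounded on the finite interval $[0,T[$, contradicting maximality. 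Easily repaired.
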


Of importance to us is \eqref{eq:goal}, because of its consequence for a probabilistic model of ``Poissonian burning'' in Euclidean space, as we explain below.
Of course, if \eqref{eq:goalstrong} holds then it implies \eqref{eq:goal}; but contrary to \eqref{eq:goal}, it might not be the case that \eqref{eq:goalstrong} holds for all values of $d$.
We will comment more on this once we give an overview of the proof of Theorem \ref{thm:main}.

\paragraph{Probabilistic motivation.}
We came across this problem while studying the following probabilistic model of ``Poissonian burning'' in Euclidean space.
Consider a Poisson process $\Pi$ with intensity $\mathrm{d}x\otimes f(t)\mathrm{d}t$ on $\mathbb{R}^d\times[0,T[$, where $f:{[0,T[}\rightarrow\mathbb{R}_+$ is a function to be adjusted later.
For each atom $(x,t)$ of $\Pi$, think of $t\in[0,T[$ as the time at which the point $x\in\mathbb{R}^d$ is set on fire, and picture the fire as then propagating at speed $1/2$ in $\mathbb{R}^d$, see Figure \ref{fig:burning1d}. 

\begin{figure}[ht]
\begin{center}
\includegraphics[width=15cm]{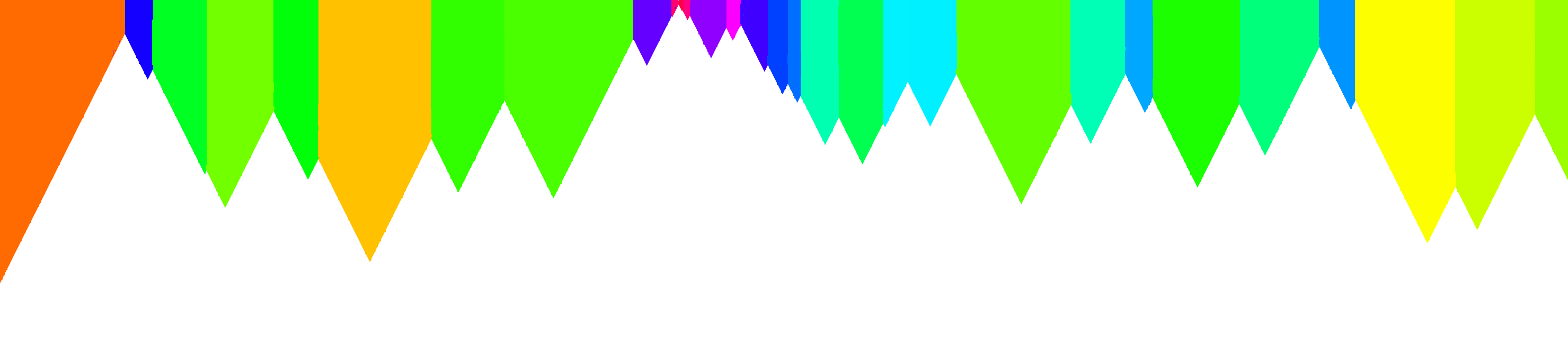}
\caption{Simulation of the model of Poissonian burning for $d=1$, in the window $[-5,5]$. Space is depicted horizontally, whereas time evolves vertically. At each time, so for each horizontal layer, we color the points that are burned with the same color as the first point of the process that set them on fire.}\label{fig:burning1d}
\end{center}
\end{figure}

Motivated by a discrete version of the burning model on the Euclidean lattice $\mathbb{Z}^d$, we equip $\mathbb{R}^d$ with the $1$-norm ${\|\cdot\|_1:x=(x_1,\ldots,x_d)\in\mathbb{R}^d\mapsto|x_1|+\ldots+|x_d|}$.
The set of points that are burned by time $t\in[0,T[$ is
\[\mathcal{B}(t)=\bigcup_{\substack{(x,s)\in\Pi\\s\leq t}}\overline{B}_{\|\cdot\|_1}\left(x,\frac{t-s}{2}\right),\]
where $\overline{B}_{\|\cdot\|_1}(x,r)=\left\{z\in\mathbb{R}^d:\|x-z\|_1\leq r\right\}$ denotes the $1$-norm ball with centre $x\in\mathbb{R}^d$ and radius $r>0$.
Now, we want to choose the function $f$ in the intensity measure of $\Pi$ so that at each time $t\in[0,T[$, the local intensity $f(t)$ of the process $\Pi$ on $\mathbb{R}^d$ compensates exactly the mean density of points that are not burned by time $t$.
In other words, we want the mean density of points that $\Pi$ puts in the unburned region $\left.\mathbb{R}^d\middle\backslash\mathcal{B}(t)\right.$ to be constantly equal to $1$.
To formalise this, let us fix $z\in\mathbb{R}^d$ and calculate the probability that $z$ is not burned by time $t$: we have
\[\begin{split}
\mathbb{P}(z\notin\mathcal{B}(t))&=\mathbb{P}\left(\Pi\left\{(x,s)\in\mathbb{R}^d\times[0,t]:\|x-z\|_1\leq\frac{t-s}{2}\right\}=0\right)\\
&=\exp\left(-\int_{\mathbb{R}^d}\int_0^t\mathbf{1}\left(\|x-z\|_1\leq\frac{t-s}{2}\right)\cdot f(s)\mathrm{d}s\mathrm{d}x\right)\\
%&=\exp\left[-\int_0^t\lambda\left(\overline{B}_{\|\cdot\|_1}\left(z,\frac{t-s}{2}\right)\right)\cdot f(s)\mathrm{d}s\right]\\
&=\exp\left(-\int_0^t\frac{2^d}{d!}\cdot\left(\frac{t-s}{2}\right)^d\cdot f(s)\mathrm{d}s\right)\\
&=\exp\left(-\int_0^t\frac{\left(t-s\right)^d}{d!}\cdot f(s)\mathrm{d}s\right).
\end{split}\]
This probability does not depend on $z$, and corresponds to the mean density of points in $\mathbb{R}^d$ that are not burned by time $t$.
Thus, we want to choose $f$ so that
\[f(t)\cdot\exp\left(-\int_0^t\frac{\left(t-s\right)^d}{d!}\cdot f(s)\mathrm{d}s\right)=1\quad\text{for all $t\in[0,T[$.}\]
This certainly holds for $f=y^{(d)}$, if $y:[0,T[\rightarrow\mathbb{R}$ is a solution of \eqref{eq:cauchy}.
Indeed, by integrating $y^{(d+1)}=y\cdot y^{(d)}$ as a first order differential equation in $y^{(d)}$, we obtain that
\[y^{(d)}(t)=\exp\left(\int_0^ty(s)\mathrm{d}s\right) =\exp\left(\int_0^t \frac{\left(t-s\right)^d}{d!}\cdot y^{(d)}(s)\mathrm{d}s\right) \quad\text{for all $t\in[0,T[$,}\]
where the last equality follows by successive integration by parts.

Ultimately, we set the intensity measure of $\Pi$ to be $\mathrm{d}x\otimes y^{(d)}(t)\mathrm{d}t$ on $\mathbb{R}^d\times[0,T[$, where $y:{[0,T[}\rightarrow\mathbb{R}$ is the maximal solution of \eqref{eq:cauchy}.
For this continuous model of Poissonian burning, the blow-up rate of $y$ is relevant to determine whether the whole of $\mathbb{R}^d$ is burned eventually by the process.
More precisely, Theorem \ref{thm:main} has the following corollary (see Section \ref{sec:poisson} for more details).

\begin{cor}\label{cor:probabilistic}
Almost surely, the whole of $\mathbb{R}^d$ is burned eventually by the process, i.e, we have
\[\bigcup_{t\in[0,T[}\mathcal{B}(t)=\mathbb{R}^d.\]
\end{cor}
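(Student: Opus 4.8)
The plan for Corollary~\ref{cor:probabilistic} is to reduce the statement to the covering of a single fixed ball, and then to settle that case via a union bound over a fine net; the crux will be that the sharp constant $d+1$ in the logarithmic asymptotics \eqref{eq:goal} is exactly what makes this union bound summable. First, since $\mathcal{B}(t)$ is non-decreasing in $t$ and $\mathbb{R}^d=\bigcup_{R\in\mathbb{N}}\overline B_{\|\cdot\|_1}(0,R)$, it suffices to fix $R\in\mathbb{N}$ and show that, almost surely, $\overline B_{\|\cdot\|_1}(0,R)\subseteq\bigcup_{t\in[0,T[}\mathcal B(t)$. I would then reformulate the event that a point is eventually burned: unravelling the definition of $\mathcal B(t)$, one sees that $z\in\bigcup_{t\in[0,T[}\mathcal B(t)$ if and only if $\Pi(C_z)\geq1$, where
\[C_z:=\Big\{(x,s)\in\mathbb{R}^d\times[0,T[\ :\ \|x-z\|_1<\tfrac{T-s}{2}\Big\}\]
is the open ``backward light cone'' with apex $(z,T)$. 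For a \emph{fixed} $z$ this already shows it is almost surely eventually burned, since $\mathbb{P}(z\notin\mathcal B(t))=\exp\!\big(-\int_0^t y(s)\,\mathrm{d}s\big)\to 0$ as $t\to T^-$ by \eqref{eq:goal} (this probability is computed in the introduction); the whole difficulty is to rule out a \emph{random} exceptional set of never-burned points.

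To get a statement uniform in $z$, I would fix $\delta\in(0,T/2)$ and introduce the slightly shrunken cone
\[C_z^\delta:=\Big\{(x,s)\ :\ 0\leq s\leq T-2\delta,\ \|x-z\|_1<\tfrac{T-2\delta-s}{2}\Big\},\]
which by the triangle inequality for $\|\cdot\|_1$ satisfies $C_{z'}^\delta\subseteq C_z$ whenever $\|z-z'\|_1\leq\delta$. The key computation is that, since a $\|\cdot\|_1$-ball of radius $r$ in $\mathbb{R}^d$ has Lebesgue measure $(2r)^d/d!$, and by the integration-by-parts identity $\int_0^t y(s)\,\mathrm{d}s=\int_0^t\frac{(t-s)^d}{d!}y^{(d)}(s)\,\mathrm{d}s$ recalled in the introduction (applied at $t=T-2\delta$),
\[\mathbb{E}\big[\Pi(C_z^\delta)\big]=\int_0^{T-2\delta}\frac{(T-2\delta-s)^d}{d!}\,y^{(d)}(s)\,\mathrm{d}s=\int_0^{T-2\delta}y(s)\,\mathrm{d}s,\]
and hence $\mathbb{P}(\Pi(C_z^\delta)=0)=\exp\!\big(-\int_0^{T-2\delta}y(s)\,\mathrm{d}s\big)$.

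I would then pick points $z_1,\dots,z_M$ of $\overline B_{\|\cdot\|_1}(0,R)$ such that every point of the ball is within $\|\cdot\|_1$-distance $\delta$ of some $z_i$, which can be arranged with $M\leq C(d)(1+R/\delta)^d$. If some $z\in\overline B_{\|\cdot\|_1}(0,R)$ is never burned then $\Pi(C_z)=0$, and choosing $z_i$ with $\|z-z_i\|_1\leq\delta$ forces $\Pi(C_{z_i}^\delta)=0$ by the inclusion above; a union bound then gives
\[\mathbb{P}\Big(\overline B_{\|\cdot\|_1}(0,R)\not\subseteq\bigcup_{t\in[0,T[}\mathcal B(t)\Big)\ \leq\ \sum_{i=1}^M\mathbb{P}\big(\Pi(C_{z_i}^\delta)=0\big)\ \leq\ C(d)(1+R/\delta)^d\exp\!\Big(-\int_0^{T-2\delta}y(s)\,\mathrm{d}s\Big).\]
By \eqref{eq:goal} one has $\int_0^{T-2\delta}y(s)\,\mathrm{d}s=(d+1+o(1))\ln(1/\delta)$ as $\delta\to0^+$, so the right-hand side is at most a constant (depending on $R,d$) times $\delta^{-d}\cdot\delta^{\,d+1+o(1)}=\delta^{\,1+o(1)}$, which tends to $0$. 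Since $\delta$ was arbitrary, the probability on the left vanishes; letting $R$ run over $\mathbb{N}$ then concludes.

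The step I expect to be the genuine obstacle is precisely this final balance of exponents. Covering one fixed point is free, but covering all points of the ball at once costs the factor $M=\Theta(\delta^{-d})$ coming from the net, and the union bound survives \emph{only} because the exponent $d+1$ produced by \eqref{eq:goal} is strictly larger than the ambient dimension $d$. In other words, what is used here is not merely that the blow-up time $T$ is finite, but the sharp constant in \eqref{eq:goal}: if $\int_0^t y(s)\,\mathrm{d}s$ grew only like $c\ln\!\big(1/(T-t)\big)$ with $c\leq d$, this argument — and presumably the conclusion of the corollary itself — would break down. Note also that only \eqref{eq:goal} is needed, not the finer expansion \eqref{eq:goalstrong}, consistently with the discussion following Theorem~\ref{thm:main}.
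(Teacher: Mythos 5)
Your argument is correct and is essentially the paper's own proof: both reduce to a fixed ball $\overline B_{\|\cdot\|_1}(0,R)$, take a $\delta$-net of $\Theta(\delta^{-d})$ points, observe that an unburned point forces the absence of Poisson atoms in a slightly shrunken backward cone around the nearest net point (the paper phrases this as $\mathbb{P}(B_{\|\cdot\|_1}(z,\varepsilon)\not\subset\mathcal{B})\leq\mathbb{P}(z\notin\mathcal{B}(T-\varepsilon))$), and conclude by a union bound whose summability rests exactly on the exponent $d+1>d$ from \eqref{eq:goal}. Your closing remark about the sharpness of the constant $d+1$ correctly identifies the crux.
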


% Thus, the mean density of points in $\mathbb{R}^d$ that are not burned by time $t$ is equal to
% \[\mathbb{P}(z\notin\mathcal{B}(t))=\exp\left(-\int_0^ty(s)\mathrm{d}s\right)=\frac{1}{y^{(d)}(t)},\]
% which is the inverse of the intensity of $\Pi$ on $\mathbb{R}^d$ locally at time $t$.
% This explains the precise choice of $y^{(d)}$ in the intensity measure of $\Pi$ and of $1/2$ for the speed of propagation of the fire.

Finally, let us mention that we plan on interpreting this continuous model of Poissonian burning in $\mathbb{R}^d$ as the scaling limit of a discrete (in space and time) model of random burning in $\mathbb{Z}^d$ in a forthcoming paper \cite{BC25}.

\paragraph{Strategy of the proof of Theorem \ref{thm:main} and organisation of the paper.}
\begin{itemize}
\item In Section \ref{sec:preliminary}, we first prove that the maximal solution $y:{[0,T[}\rightarrow\mathbb{R}$ of \eqref{eq:cauchy} blows up in finite time, and provide rough bounds on $y$ and its derivatives near the explosion time $T$, relying on the absolute monotonicity of $y$,

\item In Section \ref{sec:reduction}, we consider some functions of $y$ and its derivatives that we know to remain bounded near the explosion time $T$, and observe that they satisfy a system of differential equations which, after a time change, is simplified into a Lotka--Volterra system,

\item In Section \ref{sec:lotkavolterra}, we study the long-time behaviour of solutions to this Lotka--Volterra system.
For $d\in\llbracket1,10\rrbracket$, a Lyapunov function argument shows that all solutions converge to the bulk stationary point, which implies \eqref{eq:goalstrong}.
Regardless of the value of $d$, the so-called method of average Lyapunov functions shows that the average in time of all solutions converges to the bulk stationary point, which implies \eqref{eq:goal}.
\end{itemize}

\paragraph{Phase transition in $d$.} As we will see in Section \ref{sec:lotkavolterra}, implementing the Lyapunov function argument mentioned above requires finding positive real numbers $\lambda_1,\ldots,\lambda_d$ for which the symmetric matrix
\begin{equation}\label{eq:onemillion}
\begin{bmatrix}
4\lambda_1&\lambda_2-\lambda_1&\lambda_3&\cdots&\lambda_d\\
\lambda_2-\lambda_1&2\lambda_2&-\lambda_2&&\\
\lambda_3&-\lambda_2&\ddots&\ddots&\\
\vdots&&\ddots&\ddots&-\lambda_{d-1}\\
\lambda_d& &&-\lambda_{d-1}&2\lambda_d
\end{bmatrix}
\end{equation}
is positive-definite.
At first, we found solutions $\lambda_1,\ldots,\lambda_d>0$ by hand for $d\in\llbracket1,9\rrbracket$, and naively thought that this would be possible for all values of $d$.
To our surprise, it turns out that such solutions $\lambda_1,\ldots,\lambda_d>0$ exist for and only for $d\in\llbracket1,10\rrbracket$, as can be proved using the Farkas lemma \cite{chris}.
Now, while this Lyapunov function argument fails for $d\geq11$, it could still be the case that all solutions of the Lotka--Volterra system converge to the bulk stationary point, which would imply \eqref{eq:goalstrong}.
However, numerical simulations suggest that for $d=11$, generic solutions of the Lotka--Volterra system oscillate around the bulk stationary point without converging (see Figure~\ref{fig:oscillation1} and Figure \ref{fig:oscillation2}).
Again, even without having the convergence of all solutions of the Lotka--Volterra system to the bulk stationary point, it could still be the case that \eqref{eq:goalstrong} holds (for instance, if the precise solution corresponding to the time-changed function of $y$ and its derivatives converges to the bulk stationary point).
It would be interesting to know what happens exactly, but this is beyond the scope of this paper.

\begin{figure}[ht]
\centering
\includegraphics[width=0.9\linewidth]{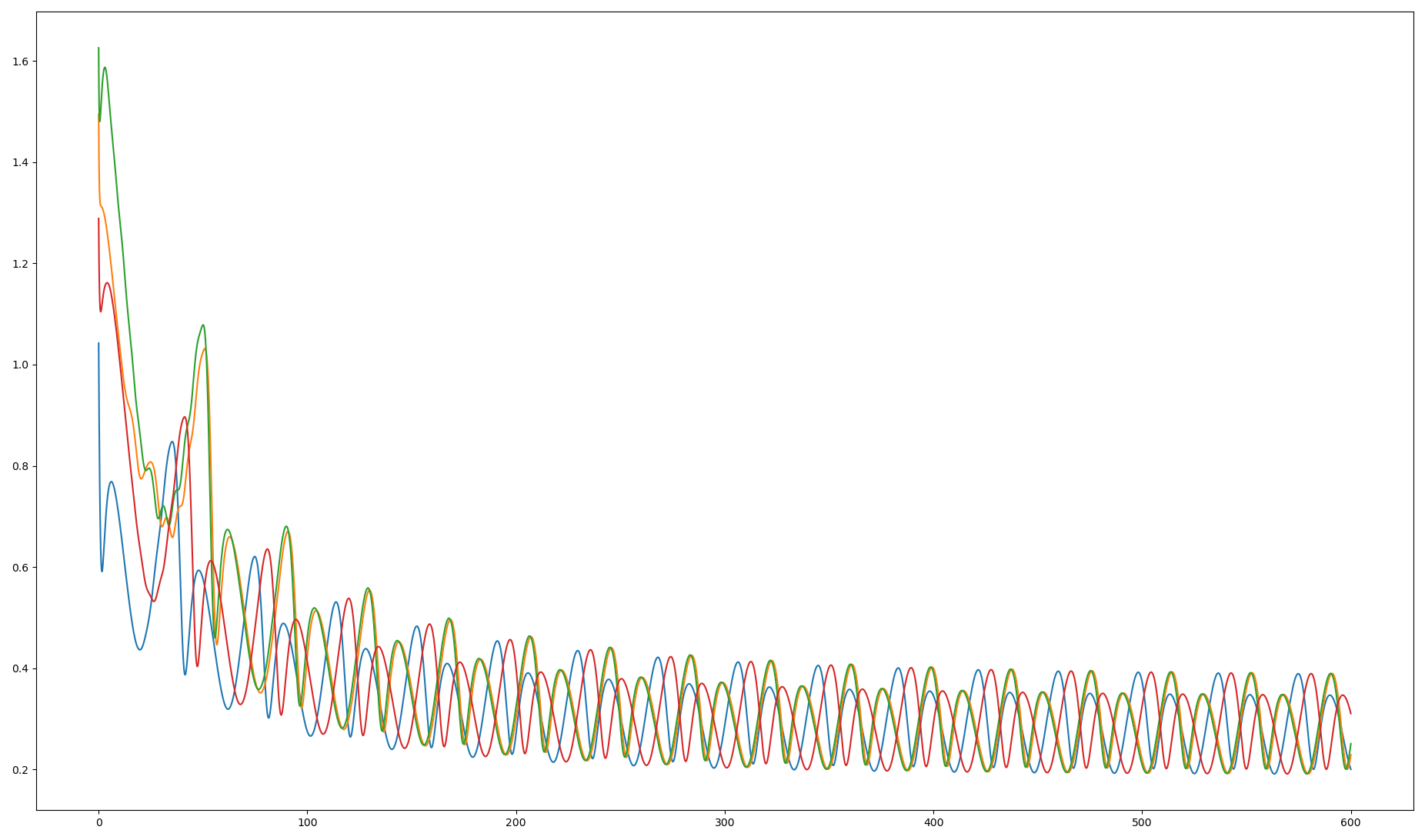}
\caption{Euclidean distance to the bulk stationary point for four generic solutions of the Lotka--Volterra system \eqref{eq:lotkavolterra} for $d=11$.}\label{fig:oscillation1}
\end{figure}

\begin{figure}[ht]
\centering
\includegraphics[width=0.9\linewidth]{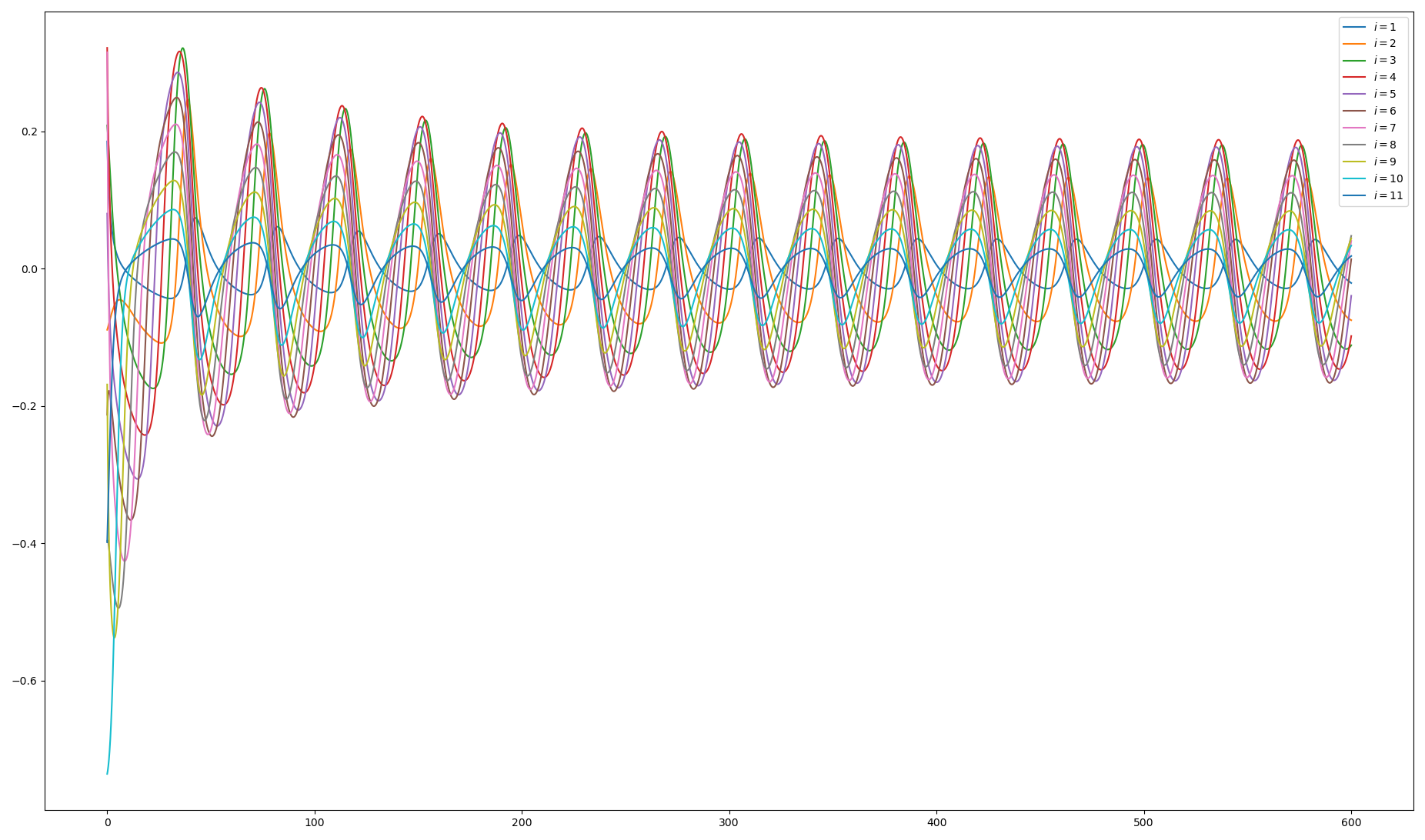}
\caption{Oscillation around the bulk stationary point for some generic solution of the Lotka--Volterra system \eqref{eq:lotkavolterra} for $d=11$, coordinate per coordinate.}\label{fig:oscillation2}
\end{figure}

\paragraph{Acknowledgements.}
We are extremely grateful to Patrick Gérard for suggesting the strategy of the proof of Theorem \ref{thm:main}, and for motivating discussions during the course of this project.
We warmly thank Christopher Criscitiello for helpful discussions around the feasibility of \eqref{eq:onemillion}.
We acknowledge support from ERC Consolidator grant ``SuPerGRandMa'', and benefitted from a PEPS JCJC grant.
G.B. is grateful to the members of LAGA at Université Sorbonne Paris Nord for their hospitality, and acknowledges support from SNSF Eccellenza grant 194648 and NCCR SwissMAP.

\section{Poissonian burning}\label{sec:poisson}

We start by explaining how the results of Theorem \ref{thm:main} allow us to study the probabilistic model of Poissonian burning introduced above.
Recall that $\Pi$ is a Poisson process with intensity ${\mathrm{d}x\otimes y^{(d)}(t)\mathrm{d}t}$ on $\mathbb{R}^d\times[0,T[$, where $y:{[0,T[}\rightarrow\mathbb{R}$ is the maximal solution of \eqref{eq:cauchy}.
We think of each atom $(x,t)$ of $\Pi$ as a source of fire appearing at position $x\in\mathbb{R}^d$ at time $t\in[0,T[$, the fire then propagating at speed $1/2$.
Equipping $\mathbb{R}^d$ with the $1$-norm $\|\cdot\|_1$, we let
\[\mathcal{B}(t)=\bigcup_{\substack{(x,s)\in\Pi\\s\leq t}}\overline{B}_{\|\cdot\|_1}\left(x,\frac{t-s}{2}\right)\]
be the set of points in $\mathbb{R}^d$ that are burned by time $t\in[0,T[$, and we let
\[\mathcal{B}=\bigcup_{t\in[0,T[}\mathcal{B}(t)=\bigcup_{(x,t)\in\Pi}B_{\|\cdot\|_1}\left(x,\frac{T-t}{2}\right)\]
be the set of points that are burned eventually, where $B_{\|\cdot\|_1}(x,r)=\left\{z\in\mathbb{R}^d:\|x-z\|_1<r\right\}$ denotes the $1$-norm open ball with centre $x\in\mathbb{R}^d$ and radius $r>0$.
Thanks to Theorem \ref{thm:main}, we are able to show that almost surely, the whole of $\mathbb{R}^d$ is burned eventually by the process, i.e, we have $\mathcal{B}=\mathbb{R}^d$ (Corollary \ref{cor:probabilistic}).

\begin{proof}[Proof of Corollary \ref{cor:probabilistic}]
The argument is similar to \cite[Proposition 2.5]{biermeestrade}, where Biermé and Estrade give a sufficient condition for ``high frequency coverage'' in the Mandelbrot--Shepp covering problem.
For each $\varepsilon>0$, notice that for all $z\in\mathbb{R}^d$, we have
\[\begin{split}
\mathbb{P}\left(\text{$B_{\|\cdot\|_1}(z,\varepsilon)$ is not contained in $\mathcal{B}$}\right)&\leq\mathbb{P}\left(z\notin\mathcal{B}(T-\varepsilon)\right)\\
&=\exp\left(-\int_0^{T-\varepsilon}\frac{(T-\varepsilon-s)^d}{d!}\cdot y^{(d)}(s)\mathrm{d}s\right)\\
&=\exp\left(-\int_0^{T-\varepsilon}y(t)\mathrm{d}t\right),
\end{split}\]
where the last equality follows by successive integration by parts.
Now, by \eqref{eq:goal}, we have
\begin{equation}\label{eq:goalweak}
\exp\left(-\int_0^{T-\varepsilon}y(t)\mathrm{d}t\right)=\varepsilon^{d+1+o(1)}\quad\text{as $\varepsilon\to0$.}
\end{equation}
We deduce that $\mathcal{B}=\mathbb{R}^d$ a.s, by the following standard argument.
Fix $R>0$, and let us show that $\left.\overline{B}_{\|\cdot\|_1}(0,R)\middle\backslash\mathcal{B}\right.=\emptyset$ a.s.
For each $\varepsilon>0$, let $\left(B_{\|\cdot\|_1}(z,\varepsilon)\,;\,z\in Z_\varepsilon\right)$ be a covering of $\overline{B}_{\|\cdot\|_1}(0,R)$ by balls of radius $\varepsilon$, with centres $z\in\overline{B}_{\|\cdot\|_1}(0,R)$ at least $\varepsilon$ apart from each other for the $1$-norm.
Since the balls $\left(B_{\|\cdot\|_1}(z,\varepsilon/2)\,;\,z\in Z_\varepsilon\right)$ are disjoint and included in $B_{\|\cdot\|_1}(0,R+\varepsilon)$, a measure argument entails that $\#Z_\varepsilon\leq(2R/\varepsilon+1)^d$.
Now, notice that we have the inclusion of events
\[\left(\left.\overline{B}_{\|\cdot\|_1}(0,R)\middle\backslash\mathcal{B}\right.\neq\emptyset\right)\subset\bigcup_{z\in Z_\varepsilon}\left(\text{$B_{\|\cdot\|_1}(z,\varepsilon)$ is not contained in $\mathcal{B}$}\right).\]
Thus, by the union bound, we get
\[\begin{split}
\mathbb{P}\left(\left.\overline{B}_{\|\cdot\|_1}(0,R)\middle\backslash\mathcal{B}\right.\neq\emptyset\right)&\leq\sum_{z\in Z_\varepsilon}\mathbb{P}\left(\text{$B_{\|\cdot\|_1}(z,\varepsilon)$ is not contained in $\mathcal{B}$}\right)\\
&\leq\left(\frac{2R}{\varepsilon}+1\right)^d\cdot\exp\left(-\int_0^{T-\varepsilon}y(t)\mathrm{d}t\right).
\end{split}\]
By \eqref{eq:goalweak}, the above right hand side goes to zero as $\varepsilon\to0$, which completes the proof.
\end{proof}

\section{Preliminary considerations}\label{sec:preliminary}

\begin{center}
\hrulefill \textit{ For the rest of the paper, we let $y:{[0,T[}\rightarrow\mathbb{R}$ be the maximal solution of \eqref{eq:cauchy}.} \hrulefill
\end{center}

We now move on to the proof of Theorem \ref{thm:main}.
In this section, we first establish some basic properties of $y$.

\subsection{Absolute monotonicity}

We start with the following ``absolute monotonicity'' property.

\begin{prop}\label{prop:absmon}
% Recall that $y:{[0,T[}\rightarrow\mathbb{R}$ is a maximal solution of the Cauchy problem \eqref{eq:cauchy}.
We have $y(t),\ldots,y^{(d-1)}(t)\geq0$ and $y^{(d)}(t)\geq1$ for all $t\in[0, T[$.
In particular, for each $i\in\llbracket0,d\rrbracket$, the function $y^{(i)}$ is strictly increasing on $[0, T[$, and we have $y^{(i)}(t)>0$ for all $t\in{]0, T[}$.
\end{prop}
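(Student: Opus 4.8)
The plan is to exploit the observation, already recorded in the introduction, that integrating $y^{(d+1)}=y\cdot y^{(d)}$ once — viewing it as the linear first-order ODE $u'=y\,u$ in the unknown $u=y^{(d)}$, with $u(0)=1$ — yields the closed form
\[
y^{(d)}(t)=\exp\!\left(\int_0^t y(s)\,\mathrm{d}s\right),\qquad t\in[0,T[.
\]
The point is that the right-hand side is \emph{manifestly positive} on all of $[0,T[$, with no further input; this is the base of everything, and crucially it does \emph{not} presuppose anything about the sign of $y$.

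From there I would run a downward induction on $i\in\llbracket0,d\rrbracket$ to prove that $y^{(i)}(t)>0$ for every $t\in{]0,T[}$. The case $i=d$ is the displayed formula. For the inductive step, assume $y^{(i)}>0$ on ${]0,T[}$ with $1\leq i\leq d$: then $\bigl(y^{(i-1)}\bigr)'=y^{(i)}$ is $\geq0$ on $[0,T[$ and $>0$ on ${]0,T[}$, so $y^{(i-1)}$ is strictly increasing on $[0,T[$; since $i-1\leq d-1$ we have $y^{(i-1)}(0)=0$, whence $y^{(i-1)}(t)>0$ for all $t\in{]0,T[}$. Iterating down to $i=0$ gives $y(t)>0$ on ${]0,T[}$, and with $y(0)=0$ this yields $y\geq0$ on $[0,T[$; note the same argument simultaneously shows that each $y^{(i)}$ with $i\leq d-1$ is strictly increasing on $[0,T[$.

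It then only remains to upgrade ``$y^{(d)}>0$'' to ``$y^{(d)}\geq1$'' and to cover $i=d$ in the monotonicity claim, both of which are immediate from what precedes. Since $y\geq0$ on $[0,T[$ we get $\int_0^t y(s)\,\mathrm{d}s\geq0$, hence $y^{(d)}(t)=\exp\!\left(\int_0^t y\right)\geq1$; and since now $y>0$ and $y^{(d)}>0$ on ${]0,T[}$, the equation gives $\bigl(y^{(d)}\bigr)'=y\cdot y^{(d)}>0$ there, so $y^{(d)}$ too is strictly increasing on $[0,T[$.

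I do not anticipate a genuine obstacle here. The only points requiring a little care are to make sure the induction is not circular — the positivity of $y^{(d)}$ must come from the exponential formula, not from the not-yet-established positivity of $y$ — and to keep track of the endpoint $t=0$, where all the lower-order derivatives vanish, so that one only concludes \emph{strict} positivity on the open interval ${]0,T[}$ while the inequalities on $[0,T[$ are the non-strict ones stated.
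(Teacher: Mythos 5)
Your proof is correct, but it takes a genuinely different route from the paper's. The paper establishes the non-strict inequalities by a connectedness argument: it shows that the set $O$ of times up to which $y,\dots,y^{(d-1)}\geq0$ and $y^{(d)}\geq1$ all hold is nonempty, closed and open in $[0,T[$, the openness coming from a local bootstrapping of the inequalities on a small interval $[t,t+\varepsilon]$; the strict statements are then deduced in a second pass. You instead short-circuit the whole first step with the integrating-factor identity $y^{(d)}(t)=\exp\bigl(\int_0^t y(s)\,\mathrm{d}s\bigr)$, which gives strict positivity of $y^{(d)}$ on all of $[0,T[$ with no sign information on $y$ whatsoever, and then cascade downwards by integration using the zero initial conditions. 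Your base step is sound and non-circular (the exponential formula is just the solution of the linear equation $u'=yu$, $u(0)=1$, valid whatever the sign of $y$), and the downward induction correctly yields both the strict positivity on $]0,T[$ and the strict monotonicity, including the case $i=d$ via $y^{(d+1)}=y\cdot y^{(d)}>0$ on $]0,T[$ and the bound $y^{(d)}\geq1$ once $y\geq0$ is known. What your approach buys is brevity and a single clean source of positivity; what the paper's buys is independence from the closed form for $y^{(d)}$ (which it nevertheless derives and uses later, in the proof of Proposition~\ref{prop:blowup}), at the cost of a somewhat longer topological argument.
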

\begin{proof}
Consider the set
\[O=\left\{t\in{[0,T[}:\text{$y(s),\dots,y^{(d-1)}(s)\geq0$ and $y^{(d)}(s)\geq1$ for all $s\in[0,t]$}\right\}.\]
For the first assertion, it suffices to prove that $O=[0,T[$.
First, observe that $O$ is a closed subset of $[0,T[$ that contains $0$.
Moreover, we claim that $O$ is also an open subset of $[0,T[$.
By the connectedness of $[0,T[$, this entails that $O=[0,T[$, as desired.
To prove the claim, fix $t\in O$.
Since $y^{(d)}(t)\geq1$, we may choose $\varepsilon>0$ small enough so that $y^{(d)}(s)\geq0$ for all $s\in[t,t+\varepsilon]$.
Now, let us show that $[0,t+\varepsilon]\subset O$.
By the definition of $O$, we automatically have $[0,t]\subset O$, so let us prove the complementary inclusion.
Since $y^{(d)}(s)\geq0$ for all $s\in[t,t+\varepsilon]$, the function $y^{(d-1)}$ is non-decreasing on $[t,t+\varepsilon]$, hence $y^{(d-1)}(s)\geq y^{(d-1)}(t)\geq0$ for all $s\in[t,t+\varepsilon]$.
In turn, the function $y^{(d-2)}$ is thus non-decreasing on $[t,t+\varepsilon]$, hence $y^{(d-2)}(s)\geq y^{(d-2)}(t)\geq0$ for all $s\in[t,t+\varepsilon]$, and so on.
The argument iterates up to $y(s)\geq y(t)\geq0$ for all $s\in[t,t+\varepsilon]$.
Finally, since $y^{(d+1)}(s)=y(s)\cdot y^{(d)}(s)\geq0$ for all $s\in[t,t+\varepsilon]$, the function $y^{(d)}$ is non-decreasing on $[t,t+\varepsilon]$, hence $y^{(d)}(s)\geq y^{(d)}(t)\geq1$ for all $s\in[t,t+\varepsilon]$.
This shows that $[t,t+\varepsilon]\subset O$, as we wanted to prove.

Now, let us prove the second assertion.
By the previous point, we have $y^{(d)}(t)\geq1$ for all $t\in[0,T[$, hence $y^{(d-1)}$ is strictly increasing on $[0,T[$.
Since $y^{(d-1)}(0)=0$, we deduce that $y^{(d-1)}(t)>0$ for all $t\in{]0,T[}$.
In turn, the function $y^{(d-2)}$ is thus strictly increasing on $[0,T[$, hence $y^{(d-2)}(t)>y^{(d-2)}(0)=0$ for all $t\in{]0,T[}$, and so on.
The argument iterates up to $y(t)>y(0)=0$ for all $t\in{]0,T[}$.
Finally, since $y^{(d+1)}(t)=y(t)\cdot y^{(d)}(t)>0$ for all $t\in{]0,T[}$, we get that $y^{(d)}$ is strictly increasing on $[0,T[$, which completes the proof.
\end{proof}

\subsection{Finite time blow-up}

We now focus on the blow-up of $y$.
Using the previous absolute monotonicity result, we start by proving that $y$ blows up in finite time.
Incidentally, this argument also provides the upper bound in \eqref{eq:goal}.
The lower bound, which is the one that we really need in view of Corollary \ref{cor:probabilistic}, requires more work.

\begin{prop}\label{prop:blowup}
The solution $y:{[0,T[}\rightarrow\mathbb{R}$ blows up in finite time: we have $T<\infty$, and $y(t)\rightarrow\infty$ as $t\to T^-$.
Moreover, we have
\[\varlimsup_{t\to T^-}\frac{1}{\ln(1/(T-t))}\cdot\int_0^ty(s)\mathrm{d}s\leq d+1.\]
\end{prop}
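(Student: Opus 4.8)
The plan is to work throughout with the primitive $F(t):=\int_0^ty(s)\,\mathrm{d}s$, so that the identity recalled in the introduction reads $y^{(d)}(t)=e^{F(t)}$ and the inequality to be proven is exactly $\varlimsup_{t\to T^-}F(t)/\ln(1/(T-t))\le d+1$. The crux will be the asymptotic lower bound
\[y(t)^{d+1}\ \ge\ (1-o(1))\,y^{(d)}(t)\qquad\text{as }t\to T^-,\]
because, combined with $y^{(d+1)}=y\cdot y^{(d)}$, it upgrades the equation near $T$ into a Bernoulli-type differential \emph{inequality} $\bigl(y^{(d)}\bigr)'\ge(1-o(1))\bigl(y^{(d)}\bigr)^{1+\frac1{d+1}}$, from which both the finite-time blow-up and the upper bound on the rate fall out by an elementary integration.

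I would first record that $y(t)\to\infty$ as $t\to T^-$ in every case: if $T<\infty$ this is the blow-up dichotomy, while if $T=\infty$ it follows from $y(t)\ge t^d/d!$, obtained by integrating $y^{(d)}\ge1$ (Proposition~\ref{prop:absmon}) $d$ times from $0$. Then comes the key estimate. Since $y^{(d)}=e^{F}$ and $y=F'$ is non-decreasing (Proposition~\ref{prop:absmon}), for $0\le s\le t$ one has $F(t)-F(s)=\int_s^t y\le(t-s)\,y(t)$, hence $y^{(d)}(s)=y^{(d)}(t)\,e^{-(F(t)-F(s))}\ge y^{(d)}(t)\,e^{-(t-s)y(t)}$. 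Inserting this into Taylor's formula with integral remainder (the boundary terms vanish because $y(0)=\dots=y^{(d-1)}(0)=0$),
\[y(t)=\int_0^t\frac{(t-s)^{d-1}}{(d-1)!}\,y^{(d)}(s)\,\mathrm{d}s\ \ge\ y^{(d)}(t)\int_0^t\frac{(t-s)^{d-1}}{(d-1)!}\,e^{-(t-s)y(t)}\,\mathrm{d}s\ =\ \frac{y^{(d)}(t)}{y(t)^d}\cdot\frac{\gamma\bigl(d,\,t\,y(t)\bigr)}{(d-1)!},\]
where $\gamma$ is the lower incomplete Gamma function; since $y(t)\to\infty$ forces $t\,y(t)\to\infty$, we have $\gamma(d,t\,y(t))/(d-1)!\to1$, which is precisely the key estimate.

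Granting it, $y^{(d+1)}=y\cdot y^{(d)}\ge(1-o(1))\bigl(y^{(d)}\bigr)^{1+\delta}$ with $\delta:=\tfrac1{d+1}$, so writing $v:=y^{(d)}$ there is $t_1<T$ with $v'\ge\tfrac12v^{1+\delta}$ on $[t_1,T[$; then $(v^{-\delta})'\le-\tfrac\delta2$, whence $v^{-\delta}(t)\le v^{-\delta}(t_1)-\tfrac\delta2(t-t_1)$, and since $v^{-\delta}>0$ this forces $T\le t_1+2v(t_1)^{-\delta}/\delta<\infty$ — the finite-time blow-up. For the rate, note that $v=y^{(d)}$ is non-decreasing with $\lim_{t\to T^-}v(t)=\infty$ (were $v$ bounded on $[0,T[$, so would be $y\le\tfrac{T^d}{d!}v$ and $y^{(d+1)}=yv$, and the maximal solution would extend past $T$). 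Fix $\varepsilon\in(0,1)$ and $t_\varepsilon<T$ with $v'\ge(1-\varepsilon)v^{1+\delta}$ on $[t_\varepsilon,T[$; for $t_\varepsilon\le t<t'<T$, integrating $(v^{-\delta})'\le-(1-\varepsilon)\delta$ gives $v^{-\delta}(t)\ge v^{-\delta}(t')+(1-\varepsilon)\delta(t'-t)$, and letting $t'\to T^-$ (so $v^{-\delta}(t')\to0$) yields $v^{-\delta}(t)\ge(1-\varepsilon)\delta(T-t)$, i.e.
\[y^{(d)}(t)\ \le\ \bigl((1-\varepsilon)\delta\bigr)^{-(d+1)}\,(T-t)^{-(d+1)}.\]
Taking logarithms, $\int_0^ty=\ln y^{(d)}(t)\le(d+1)\ln\frac1{T-t}+O(1)$ near $T$; dividing by $\ln\frac1{T-t}$ and letting $t\to T^-$ gives the claimed $\varlimsup\le d+1$.

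The main obstacle is the key estimate $y(t)^{d+1}\gtrsim y^{(d)}(t)$. A naive appeal to the monotonicity of $y^{(d)}$ only gives $y(t)\le\tfrac{t^d}{d!}y^{(d)}(t)$, which is far too lossy: it does not even imply $T<\infty$, and the best rate one could extract from the resulting inequality is $(T-t)^{-1}$ instead of the correct $(T-t)^{-(d+1)}$. The sharp bound comes from the observation that near $T$ the function $F$ grows fast enough that $y^{(d)}(s)=y^{(d)}(t)\,e^{-(F(t)-F(s))}$ decays geometrically in $t-s$ at rate $y(t)$, so the integral representing $y(t)$ concentrates on a window of width $\sim 1/y(t)$; carrying this concentration out exactly, through the Gamma integral, is what pins down the exponent $d+1$ rather than just some finite constant.
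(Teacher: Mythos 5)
Your proof is correct, and it takes a genuinely different route from the paper's. The paper also works with $x=\int_0^t y$, which solves $x^{(d+1)}=\mathrm{e}^{x}$, but then runs a discrete argument: it introduces the times $t_n$ defined by $x(t_n)=n$, integrates the crude bound $x^{(d+1)}\geq \mathrm{e}^{n}$ a total of $d+1$ times over $[t_n,t_{n+1}]$ (using the nonnegativity of the intermediate derivatives at $t_n$, from Proposition~\ref{prop:absmon}) to get $t_{n+1}-t_n\leq ((d+1)!)^{1/(d+1)}\mathrm{e}^{-n/(d+1)}$, and sums the geometric series to obtain both $T<\infty$ and $\mathrm{e}^{x(t)/(d+1)}\leq C/(T-t)$. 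You instead derive the pointwise differential inequality $\bigl(y^{(d)}\bigr)'\geq(1-o(1))\bigl(y^{(d)}\bigr)^{1+1/(d+1)}$ from the lower bound $y^{d+1}\geq(1-o(1))\,y^{(d)}$, itself obtained by feeding the exact relation $y^{(d)}=\mathrm{e}^{F}$ into Taylor's formula and observing that the resulting integral concentrates on a window of width $1/y(t)$; a single Bernoulli-type integration then yields $T<\infty$ and the pointwise bound $y^{(d)}(t)\leq C_\varepsilon\,(T-t)^{-(d+1)}$ in one stroke. Both arguments rest on the same inputs (absolute monotonicity and $y^{(d)}=\exp\int_0^t y$); the paper's is more elementary (no incomplete Gamma function, no Taylor remainder), while yours replaces the level-set bookkeeping by a clean comparison ODE and makes the source of the exponent $d+1$ — the concentration of the Taylor integral — more transparent. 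Two minor points to tighten in a write-up: (i) when you invoke the blow-up dichotomy to get $y(t)\to\infty$ for $T<\infty$, the Cauchy--Lipschitz corollary a priori only says the state vector $(y,\ldots,y^{(d)})$ leaves every compact set, so add the one-line remark that if $y$ stayed bounded then $y^{(d)}=\mathrm{e}^{F}$ and hence all derivatives would be bounded on the finite interval $[0,T[$, contradicting maximality (the paper glosses over the same point); (ii) your justification that $y^{(d)}(t)\to\infty$ as $t\to T^-$, needed so that $v^{-\delta}(t')\to0$, is correct and worth keeping explicit.
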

\begin{proof}
First, observe that $z:{[0,\tau[}\rightarrow\mathbb{R}$ is a (maximal) solution of \eqref{eq:cauchy} if and only if the function ${x:t\in{[0,\tau[}\mapsto\int_0^tz(s)\mathrm{d}s}$ is a (maximal) solution of the Cauchy problem
\begin{equation}\label{eq:cauchyx}
\begin{cases}
x^{(d+1)}(t)= \mathrm{e}^{x(t)},&t\in\mathbb{R}_+,\\
x(0),\ldots,x^{(d)}(0)=0.
\end{cases}
\end{equation}
Indeed, if $z:{[0,\tau[}\rightarrow\mathbb{R}$ is a solution of \eqref{eq:cauchy}, then by integrating $z^{(d+1)}=z\cdot z^{(d)}$ as a first order differential equation in $z^{(d)}$, we get that
\[z^{(d)}(t)=\exp\left(\int_0^tz(s)\mathrm{d}s\right)\quad\text{for all $t\in[0,\tau[$,}\]
hence the function $x:t\in{[0,\tau[}\mapsto\int_0^tz(s)\mathrm{d}s$ is a solution of \eqref{eq:cauchyx}.
Conversely, if $x:{[0,\tau[}\rightarrow\mathbb{R}$ is a solution of \eqref{eq:cauchyx}, then its derivative $z=x'$ satisfies $z^{(d)}= \mathrm{e}^x$, hence ${z^{(d+1)}=x'\cdot \mathrm{e}^x=z\cdot z^{(d)}}$.
Back to the proof of the proposition now, since $y:{[0,T[}\rightarrow\mathbb{R}$ is the maximal solution of \eqref{eq:cauchy}, we obtain that the function ${x:t\in{[0,T[}\mapsto\int_0^ty(s)\mathrm{d}s}$ is a maximal solution of \eqref{eq:cauchyx}.
Thus, by a standard corollary of the Cauchy--Lipschitz theorem, we have either $T=\infty$, or $T<\infty$ and $x(t)\rightarrow\infty$ as $t\to T^-$.
Ultimately, we want to show that the second case prevails, but before getting there, observe that even if $T=\infty$, we must have $x(t)\rightarrow\infty$ as $t\to T^-$.
Indeed, by Proposition \ref{prop:absmon}, the function $y$ is strictly increasing on $[0,T[$, and strictly positive on $]0,T[$.
In particular, if $T=\infty$, then we have
\[x(t)=\int_0^ty(s)\mathrm{d}s\geq\int_1^ty(1)\mathrm{d}s=(t-1)\cdot y(1)\underset{t\to\infty}{\longrightarrow}\infty.\]
Therefore, regardless of the finiteness of $T$, the function $x$ is an increasing bijection from $[0,T[$ to $\mathbb{R}_+$.
Now, consider the sequence $0=t_0<t_1<\ldots\in[0,T[$ defined by $x(t_n)=n$ for all $n\in\mathbb{N}$.
Since $T=\sum_{n\geq0}(t_{n+1}-t_n)$, it suffices to provide a summable bound on the $(t_{n+1}-t_n)$'s to prove the finiteness of $T$.
To this end, we argue as follows.
For each $n\in\mathbb{N}$, we have $x^{(d+1)}(t)= \mathrm{e}^{x(t)}\geq \mathrm{e}^n$ for all $t\in[t_n,t_{n+1}]$, hence
\[x^{(d)}(t)\geq(t-t_n)\cdot \mathrm{e}^n\quad\text{for all $t\in[t_n,t_{n+1}]$.}\]
We used here that $x^{(d)}(t_n)=y^{(d-1)}(t_n)\geq0$, by Proposition \ref{prop:absmon}.
Integrating the above inequality between $t_n$ and $t$ and iterating the argument, we obtain that
\[x^{(d-1)}(t)\geq\frac{(t-t_n)^2}{2}\cdot \mathrm{e}^n\quad\text{for all ${t\in[t_n,t_{n+1}]}$,}\]
and so on up to
\[x'(t)\geq\frac{(t-t_n)^d}{d!}\cdot \mathrm{e}^n\quad\text{for all $t\in[t_n,t_{n+1}]$.}\]
Integrating one last time from $t_n$ to $t_{n+1}$, we deduce that
\[1=x(t_{n+1})-x(t_n)\geq\frac{(t_{n+1}-t_n)^{d+1}}{(d+1)!}\cdot \mathrm{e}^n,\]
hence
\[t_{n+1}-t_n\leq\left((d+1)!\cdot \mathrm{e}^{-n}\right)^{1/(d+1)}=:C\cdot \mathrm{e}^{-c\cdot n},\]
with $c=1/(d+1)$ and $C=((d+1)!)^{1/(d+1)}$.
In particular, we obtain that $T=\sum_{n\geq0}(t_{n+1}-t_n)$ is finite, as desired.
Moreover, we obtain the bound
\[T-t_n=\sum_{k\geq n}(t_{k+1}-t_k)\leq\sum_{k\geq n}C\cdot \mathrm{e}^{-c\cdot k}=C\cdot\frac{ \mathrm{e}^{-c\cdot n}}{1- \mathrm{e}^{-c}}\]
for all $n\in\mathbb{N}$, which can be rearranged as
\[ \mathrm{e}^{c\cdot(n+1)}\leq\frac{C\cdot  \mathrm{e}^c}{1- \mathrm{e}^{-c}}\cdot\frac{1}{T-t_n}.\]
It follows that for all $t\in[t_n,t_{n+1}]$, we have
\[ \mathrm{e}^{c\cdot x(t)}\leq \mathrm{e}^{c\cdot(n+1)}\leq\frac{C\cdot \mathrm{e}^c}{1- \mathrm{e}^{-c}}\cdot\frac{1}{T-t_n}\leq\frac{C\cdot \mathrm{e}^c}{1- \mathrm{e}^{-c}}\cdot\frac{1}{T-t},\]
and since the furthermost sides of this bound do not depend on $n$, we conclude that
\[ \mathrm{e}^{c\cdot x(t)}\leq\frac{C\cdot \mathrm{e}^c}{1- \mathrm{e}^{-c}}\cdot\frac{1}{T-t}\quad\text{for all $t\in[0,T[$.}\]
Recalling that $c=1/(d+1)$, this yields
\[\varlimsup_{t\to T^-}\frac{x(t)}{\ln(1/(T-t))}\leq d+1,\]
as desired.
\end{proof}

\subsection{Rough bounds}

To prepare for the core of the proof, we conclude this section by establishing the following rough bounds on $y$ and its derivatives near the explosion time $T$.

\begin{prop}\label{prop:roughbounds}
For each $i\in\llbracket0,d\rrbracket$, we have $y^{(i)}(t)\rightarrow\infty$ as $t\to T^-$.
Furthermore, we have
\[\varliminf_{t\to T^-}(T-t)\cdot y(t)\geq2,\]
and for each $i\in\llbracket1,d\rrbracket$, we have
\[\varlimsup_{t\to T^-}\frac{y^{(i)}(t)}{y(t)\cdot y^{(i-1)}(t)}\leq1.\]
\end{prop}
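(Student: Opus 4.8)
The plan is to exploit the integrated equation $y^{(d)}(t)=\exp\left(\int_0^t y(s)\,\mathrm{d}s\right)$ together with the absolute monotonicity from Proposition~\ref{prop:absmon} and the upper bound $\varlimsup_{t\to T^-}\frac{1}{\ln(1/(T-t))}\int_0^t y(s)\,\mathrm{d}s\leq d+1$ from Proposition~\ref{prop:blowup}. First I would establish that $y^{(i)}(t)\to\infty$ as $t\to T^-$ for every $i\in\llbracket0,d\rrbracket$. For $i=d$ this is immediate from $y^{(d)}=\exp\left(\int_0^t y\right)$ and $\int_0^t y\to\infty$ (which follows from Proposition~\ref{prop:blowup}, noting $T<\infty$). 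For $i<d$, since $y^{(d)}$ is increasing and tends to $\infty$ while $T<\infty$, integrating $y^{(d)}$ from a fixed point forces $y^{(d-1)}(t)\to\infty$, and iterating downward gives $y^{(i)}(t)\to\infty$ for all $i\in\llbracket0,d\rrbracket$.

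Next I would prove the lower bound $\varliminf_{t\to T^-}(T-t)\cdot y(t)\geq2$. The idea is that near $T$, the equation $y^{(d+1)}=y\cdot y^{(d)}$ behaves, to leading order, like the $d=1$ equation $y''=y\cdot y'$ whose blow-up solution satisfies $(T-t)y(t)\to 2$. Concretely, fix $\delta>0$; I would show that for $t$ close enough to $T$, we have $y^{(d+1)}(t)\geq (2-\delta)\cdot\frac{y(t)}{T-t}\cdot\frac{y^{(d)}(t)}{\,\cdot\,}$—more precisely, I expect to argue as follows. Since $y^{(i)}(t)\to\infty$ for all $i$, and using that each $y^{(i)}$ is increasing and convex (as $y^{(i+1)}, y^{(i+2)}\geq 0$ near $T$), one gets rough comparisons like $y^{(i-1)}(t)\leq (T-t)\cdot y^{(i)}(t)\cdot(1+o(1))$ is \emph{not} quite what we want; rather, the cleaner route is: from $y^{(d)}=\exp(\int_0^t y)$ we get $\frac{(y^{(d)})'}{y^{(d)}}=y$, so $\int_s^t y(u)\,\mathrm{d}u=\ln\frac{y^{(d)}(t)}{y^{(d)}(s)}$. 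Combining this with a Grönwall-type estimate comparing $y$ to the solution of $w'=\frac{1}{2}w^2$ should yield the claim; I would in fact prove the matching statement $\varlimsup_{t\to T^-}\frac{y^{(i)}(t)}{y(t)\,y^{(i-1)}(t)}\leq1$ first and then deduce the lower bound on $(T-t)y(t)$ from the case $i=d$.

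So the heart of the proof is the family of bounds $\varlimsup_{t\to T^-}\frac{y^{(i)}(t)}{y(t)\cdot y^{(i-1)}(t)}\leq1$ for $i\in\llbracket1,d\rrbracket$. For $i=d$ this reads $\varlimsup \frac{y^{(d)}}{y\cdot y^{(d-1)}}\leq 1$; since $y^{(d+1)}=y\cdot y^{(d)}$, this is equivalent to $\varlimsup \frac{y^{(d+1)}/y^{(d)}}{y\cdot y^{(d)}/y^{(d-1)}}$ type manipulations, but more directly: write $g_i=\ln y^{(i)}$ near $T$ (legitimate once $y^{(i)}>0$); then $g_d'=y$ and we want $\varlimsup\frac{g_i'}{g_{i-1}'}\cdot\frac{y^{(i-1)}}{y^{(i)}}\cdot\frac{1}{1}$... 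Let me instead state the mechanism I expect to use: integrate $y^{(i)}(t)=y^{(i)}(t_0)+\int_{t_0}^t y^{(i+1)}$ and, for $i=d$, substitute $y^{(d+1)}=y\,y^{(d)}$ to get $y^{(d)}(t)=y^{(d)}(t_0)+\int_{t_0}^t y(s)y^{(d)}(s)\,\mathrm{d}s$; since $y^{(d)}$ and $y$ are increasing, $\int_{t_0}^t y\,y^{(d)}\leq y^{(d)}(t)\int_{t_0}^t y = y^{(d)}(t)\big(y^{(d-1)}(t)-y^{(d-1)}(t_0)\big)$, which after dividing by $y(t)y^{(d-1)}(t)$ and using $y^{(d)}(t_0)/y^{(d-1)}(t)\to 0$... this does not immediately close either. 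The main obstacle, as I see it, is precisely extracting the sharp constant $1$ (equivalently $2$) rather than some larger constant; I anticipate the correct argument uses the integration by parts identity $y^{(d)}(t)=\int_0^t\frac{(t-s)^d}{d!}y^{(d)}(s)\,\mathrm{d}s + (\text{polynomial})$ stated in the introduction, together with a Laplace/Watson-type analysis of where the mass of the integral concentrates as $t\to T^-$ (near $s=t$, on a scale $\sim(T-t)$), feeding back the crude two-sided bounds $y^{(i)}=\Theta((T-t)^{-i-1})$ that follow once we have any constant. I would therefore: (1) first get crude bounds $c_i(T-t)^{-i-1}\leq y^{(i)}(t)\leq C_i(T-t)^{-i-1}$ from Proposition~\ref{prop:blowup} plus iterated integration; (2) plug these into the integral identity to localise and obtain $\frac{y^{(i)}(t)}{y(t)y^{(i-1)}(t)}\leq 1+o(1)$; (3) specialise $i=d$ and bootstrap to $(T-t)y(t)\geq 2-o(1)$. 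Step (2) — the localisation giving the exact constant — is the delicate point.
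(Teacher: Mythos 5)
Your first part ($y^{(i)}(t)\to\infty$ for all $i$) matches the paper's argument. But the heart of the proposition --- the bounds $\varlimsup\,y^{(i)}/(y\cdot y^{(i-1)})\leq1$ --- is not actually proved in your proposal: you try one manipulation, observe it does not close, and then defer to a Laplace-type localisation that you do not carry out. There is a genuine gap here, and moreover the simple fix is within reach of the very identity you wrote down. In
\[y^{(d)}(t)=1+\int_0^t y(s)\,y^{(d)}(s)\,\mathrm{d}s,\]
you bounded $y^{(d)}(s)\leq y^{(d)}(t)$ and were left with $\int_{t_0}^t y$, which you then misidentified as $y^{(d-1)}(t)-y^{(d-1)}(t_0)$ (it is an antiderivative of $y$, not of $y^{(d)}$). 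The correct move is to bound the \emph{other} factor, $y(s)\leq y(t)$ (legitimate since $y$ is increasing by Proposition~\ref{prop:absmon}), leaving $\int_0^t y^{(d)}(s)\,\mathrm{d}s=y^{(d-1)}(t)$, hence $y^{(d)}(t)\leq 1+y(t)\,y^{(d-1)}(t)$ and the sharp constant $1$ for free. Integrating this inequality and repeating the same trick cascades down to $y^{(i)}(t)\leq \frac{t^{d-i}}{(d-i)!}+y(t)\,y^{(i-1)}(t)$ for every $i$, which is the whole of the second family of bounds.

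Two further points. First, your step (3) is misdirected: the estimate $\varliminf\,(T-t)y(t)\geq2$ does not come from the case $i=d$ but from the bottom of the cascade, $y'(t)\leq T^{d-1}/(d-1)!+y(t)^2/2$, which is a Riccati-type inequality; integrating $\arctan(y/\sqrt{2c})$ from $t$ to $T$ and using $\pi/2-\arctan x\sim 1/x$ gives $(T-t)y(t)\geq2+o(1)$. Second, your fallback plan (1)--(2) is not available at this stage of the paper: the two-sided crude bounds $y^{(i)}(t)=\Theta((T-t)^{-i-1})$ that your localisation argument would feed on are \emph{not} consequences of Proposition~\ref{prop:blowup} (which only yields an upper bound on $\int_0^t y$, hence on $y^{(d)}$); the matching lower bounds are only established later via the permanence of the Lotka--Volterra system, so invoking them here would be circular.
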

\begin{proof}
Recall that by Proposition \ref{prop:blowup}, we have $T<\infty$ and $y(t)\rightarrow\infty$ as $t\to T^-$.
Now, fix $i\in\llbracket1,d\rrbracket$, and let us prove that $y^{(i)}(t)\rightarrow\infty$ as $t\to T^-$.
By Proposition \ref{prop:absmon}, the function $y^{(i)}$ is strictly increasing on $[0,T[$.
Moreover, this function cannot be bounded, for otherwise $y$, which can be expressed as a function of $y^{(i)}$ by integrating $i$ times, would be bounded since $T<\infty$.
This shows that $y^{(i)}(t)\rightarrow\infty$ as $t\to T^-$, as desired.

Now, we move on to the rough bounds on $y$ and its derivatives near the explosion time $T$; starting with the last one, for $i=d$.
First, since $y$ is a solution of \eqref{eq:cauchy}, we have
\[y^{(d)}(t)=1+\int_0^ty(s)\cdot y^{(d)}(s)\mathrm{d}s\quad\text{for all $t\in[0,T[$.}\]
Thus, by the absolute monotonicity of $y$ (Proposition \ref{prop:absmon}), we obtain the bound
\begin{equation}\label{eq:boundud}
y^{(d)}(t)\leq1+\int_0^ty(t)\cdot y^{(d)}(s)\mathrm{d}s=1+y(t)\cdot y^{(d-1)}(t)\quad\text{for all $t\in[0,T[$;}
\end{equation}
from which we deduce, since $y(t)\cdot y^{(d-1)}(t)\rightarrow\infty$ as $t\to T^-$, that
\[\varlimsup_{t\to T^-}\frac{y^{(d)}(t)}{y(t)\cdot y^{(d-1)}(t)}\leq1.\]
Next, integrating \eqref{eq:boundud} and iterating the argument, we obtain the bound
\[y^{(d-1)}(t)\leq t+\int_0^ty(s)\cdot y^{(d-1)}(s)\mathrm{d}s\leq t+\int_0^ty(t)\cdot y^{(d-1)}(s)\mathrm{d}s=t+y(t)\cdot y^{(d-2)}(t)\]
for all $t\in[0,T[$; from which we deduce, since $y(t)\cdot y^{(d-2)}(t)\rightarrow\infty$ as $t\to T^-$, that
\[\varlimsup_{t\to T^-}\frac{y^{(d-1)}(t)}{y(t)\cdot y^{(d-2)}(t)}\leq1.\]
This argument can be iterated up to obtaining the bound
\begin{equation}\label{eq:boundu1}
y'(t)\leq\frac{t^{d-1}}{(d-1)!}+\int_0^ty(s)\cdot y'(s)\mathrm{d}s\leq\frac{T^{d-1}}{(d-1)!}+\frac{y(t)^2}{2}\quad\text{for all $t\in[0,T[$;}
\end{equation}
from which we deduce, since $y(t)\rightarrow\infty$ as $t\to T^-$, that
\[\varlimsup_{t\to T^-}\frac{y'(t)}{y(t)^2}\leq\frac{1}{2}\leq1.\]
Finally, let us prove that $\varliminf_{t\to T^-}(T-t)\cdot y(t)\geq2$.
To ease notation, we set $c=\left.T^{d-1}\middle/(d-1)!\right.$.
Equation \eqref{eq:boundu1} may then be rearranged as
\[\frac{\left.y'(t)\middle/\sqrt{2c}\right.}{1+\left(y(t)\middle/\sqrt{2c}\right)^2}\leq\sqrt{\frac{c}{2}}\quad\text{for all $t\in[0,T[$,}\]
in which the left hand side is none other than the derivative of $t\in{[0,T[}\mapsto\arctan\left(y(t)\middle/\sqrt{2c}\right)$.
Since $y(t)\rightarrow\infty$ as $t\to T^-$, integrating the above equation from $t$ to $T$ yields
\[\frac{\pi}{2}-\arctan\left(\frac{y(t)}{\sqrt{2c}}\right)\leq\sqrt{\frac{c}{2}}\cdot(T-t)\quad\text{for all $t\in[0,T[$.}\]
Finally, since $\pi/2-\arctan(x)\sim1/x$ as $x\to\infty$, we deduce that
\[\frac{1+o(1)}{\left.y(t)\middle/\sqrt{2c}\right.}\leq\sqrt{\frac{c}{2}}\cdot(T-t)\quad\text{as $t\to T^-$,}\]
which can be rearranged as
\[(T-t)\cdot y(t)\geq2+o(1)\quad\text{as $t\to T^-$.}\]
This completes the proof.
\end{proof}

\section{Reduction to a Lotka--Volterra system}\label{sec:reduction}

In this section, we consider some functions of $y$ and its derivatives that we know to remain bounded near the explosion time~$T$ thanks to Proposition \ref{prop:roughbounds}, and observe that they satisfy a system of differential equations which, after a time change, is simplified into a Lotka--Volterra system.
For the sake of readability, we split this reduction argument into three straightforward claims (see Claim \ref{claim:1}, \ref{claim:2} and \ref{claim:3} below).

We then conclude this section by giving the proof of Theorem \ref{thm:main}, taking for granted the statement of Proposition \ref{prop:climax} below about the long-time behaviour of solutions to the Lotka--Volterra system mentioned above.
The latter result will be proved in the next section.

\paragraph{Time changes.}
Guided by the rough bounds obtained in Proposition \ref{prop:roughbounds}, we set
\[u_0(t)=\frac{1}{(T-t)\cdot y(t)}\quad\text{for all $t\in{]0,T[}$,}\]
and for each $i\in\llbracket1,d\rrbracket$, we let
\[u_i(t)=\frac{y^{(i)}(t)}{y(t)\cdot y^{(i-1)}(t)}\quad\text{for all $t\in{]0,T[}$.}\]
Note that by the absolute monotonicity of $y$ (Proposition \ref{prop:absmon}), these functions are well-defined and strictly positive.

\begin{claim}\label{claim:1}
The functions $u_0,u_1,\ldots,u_d$ satisfy the following system of differential equations: for all $t\in{]0,T[}$, we have
\begin{equation}\label{eq:systemu}
\begin{cases}
u_0'(t)=\frac{1}{T-t}\cdot(u_0(t)-u_1(t)),&\\
u_i'(t)=\frac{1}{T-t}\cdot\frac{u_i(t)(u_{i+1}(t)-u_1(t)-u_i(t))}{u_0(t)},&i\in\llbracket1,d-1\rrbracket,\\
u_d'(t)=\frac{1}{T-t}\cdot\frac{u_d(t)(1-u_1(t)-u_d(t))}{u_0(t)}.
\end{cases}
\end{equation}
\end{claim}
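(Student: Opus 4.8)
The plan is to differentiate each $u_i$ directly using the definitions and the ODE $y^{(d+1)} = y \cdot y^{(d)}$, then massage each expression into the claimed form. This is a computational claim, so the "proof" is really an organized calculation; the main work is bookkeeping, and the only genuine subtlety is handling the endpoint case $i = d$ separately (where $y^{(d+1)}$ appears and gets replaced via the equation) versus the generic case $1 \le i \le d-1$.

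\medskip

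\textbf{Step 1: the equation for $u_0$.} Writing $u_0(t) = \frac{1}{(T-t)y(t)}$, differentiate by the product/quotient rule:
\[
u_0'(t) = \frac{y(t) + (T-t)y'(t)}{\big((T-t)y(t)\big)^2} \cdot (-1) \cdot (-1)
\]
— more carefully, $\frac{d}{dt}\big[(T-t)y(t)\big] = -y(t) + (T-t)y'(t)$, so $u_0'(t) = -\frac{-y(t) + (T-t)y'(t)}{((T-t)y(t))^2} = \frac{y(t) - (T-t)y'(t)}{((T-t)y(t))^2}$. Now split this as $\frac{1}{(T-t)^2 y(t)} - \frac{y'(t)}{(T-t)y(t)^2}$. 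The first term is $\frac{1}{T-t}\cdot u_0(t)$. For the second, recognize $\frac{y'(t)}{y(t)^2} = \frac{1}{y(t)}\cdot\frac{y'(t)}{y(t)} = \big((T-t)u_0(t)\big)\cdot\big(y(t) u_1(t)/y(t)\big)$... rather: $\frac{y'(t)}{y(t)\cdot y(t)} = u_1(t)\cdot\frac{1}{y(t)}$, hence $\frac{y'(t)}{(T-t)y(t)^2} = \frac{u_1(t)}{(T-t)y(t)} = \frac{1}{T-t}u_0(t)u_1(t)$. Hmm, that gives $u_0 u_1$, not $u_1$; so instead one writes $\frac{y'(t)}{(T-t)y(t)^2} = \frac{1}{T-t}\cdot u_1(t)\cdot u_0(t)\cdot$ — I'd double-check by noting $u_1 = y'/y^2$ so $y'/y^2 = u_1$, thus the term is $u_1(t)/(T-t) \cdot \frac{1}{y(t)}\cdot y(t)$... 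The cleanest route: $\frac{y'(t)}{(T-t)y(t)^2} = \frac{1}{(T-t)}\cdot\frac{y'(t)}{y(t)^2}= \frac{1}{(T-t)}u_1(t)\cdot\frac{1}{?}$. I will just carry out the substitution carefully in the writeup; the point is that both resulting terms carry the factor $\frac1{T-t}$ and combine to $\frac{1}{T-t}(u_0 - u_1)$ after using $y = \frac{1}{(T-t)u_0}$ to convert.

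\medskip

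\textbf{Step 2: the equation for $u_i$, $1 \le i \le d$.} Take the logarithmic derivative, which is the natural thing here since $u_i = y^{(i)}/(y\, y^{(i-1)})$ is a ratio of monotone positive functions (Proposition \ref{prop:absmon}):
\[
\frac{u_i'(t)}{u_i(t)} = \frac{y^{(i+1)}(t)}{y^{(i)}(t)} - \frac{y'(t)}{y(t)} - \frac{y^{(i)}(t)}{y^{(i-1)}(t)}.
\]
Now observe that for $1 \le j \le d$ one has $\dfrac{y^{(j)}(t)}{y^{(j-1)}(t)} = y(t)\cdot u_j(t)$, and $\dfrac{y'(t)}{y(t)} = y(t)\cdot u_1(t)$. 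For $i \le d-1$ this immediately gives $\frac{u_i'}{u_i} = y\,(u_{i+1} - u_1 - u_i)$, and multiplying by $u_i$ and using $y(t) = \frac{1}{(T-t)u_0(t)}$ yields exactly $u_i' = \frac{1}{T-t}\cdot\frac{u_i(u_{i+1}-u_1-u_i)}{u_0}$. For $i = d$, the term $\frac{y^{(d+1)}}{y^{(d)}}$ is, by the differential equation \eqref{eq:cauchy}, equal to $\frac{y\cdot y^{(d)}}{y^{(d)}} = y(t)$; so $\frac{u_d'}{u_d} = y - y u_1 - y u_d = y(1 - u_1 - u_d)$, and the same substitution $y = \frac{1}{(T-t)u_0}$ gives the last line of \eqref{eq:systemu}.

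\medskip

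\textbf{Obstacle and remarks.} There is no real obstacle: every denominator is nonzero and every function is differentiable on $]0,T[$ because $y^{(0)},\dots,y^{(d)}$ are smooth and strictly positive there by Proposition \ref{prop:absmon}. The only thing to be careful about is (a) doing the $u_0$ computation without sign errors, and (b) not forgetting that the case $i=d$ genuinely differs from $1 \le i \le d-1$: it is precisely at $i = d$ that the Blasius-type equation enters, replacing what would be "$y\,u_{d+1}$" by "$y$" (equivalently, one may read this as the convention $u_{d+1} \equiv 1$). I would present Step 1 and Step 2 as two short paragraphs, flag the $i=d$ substitution explicitly, and note once at the start that differentiability and positivity are guaranteed by Proposition \ref{prop:absmon}.
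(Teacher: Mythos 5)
Your proposal is correct and matches the paper's approach exactly --- the paper omits the computation entirely, declaring it straightforward and noting, just as you do, that the equation $y^{(d+1)}=y\cdot y^{(d)}$ is needed only for the last line. The muddle you left dangling in Step 1 resolves in one line: since $y^{(0)}=y$, the definition gives $u_1=y'/y^2$ directly, so $\frac{y'(t)}{(T-t)\,y(t)^2}=\frac{1}{T-t}\cdot u_1(t)$ with no stray factor of $u_0$ or $1/y$, and the two terms combine to $\frac{1}{T-t}(u_0(t)-u_1(t))$ as claimed.
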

We spare the reader the details of these calculations, which are straightforward and use the fact that $y$ is a solution of \eqref{eq:cauchy} only for the last equation.
We then let $t_0=(1-1/ \mathrm{e}) T\in{]0,T[}$, and we introduce the time change
\[\begin{matrix}
\phi:&\mathbb{R}_+&\longrightarrow&[t_0,T[\\
&t&\longmapsto&T\cdot\left(1- \mathrm{e}^{-(1+t)}\right).
\end{matrix}\]
For each $i\in\llbracket0,d\rrbracket$, we set $v_i(t)=u_i(\phi(t))$ for all $t\in\mathbb{R}_+$.
Note that by the rough bounds of Proposition \ref{prop:roughbounds}, there exists a constant $C>0$ such that for each $i\in\llbracket0,d\rrbracket$, we have $v_i(t)\leq C$ for all $t\in\mathbb{R}_+$.

\begin{claim}\label{claim:2}
The functions $v_0,v_1,\ldots,v_d$ satisfy the following system of differential equations: for all $t\in\mathbb{R}_+$, we have
\begin{equation}\label{eq:systemv}
\begin{cases}
v_0'(t)=v_0(t)-v_1(t),&\\
v_i'(t)=\frac{v_i(t)(v_{i+1}(t)-v_1(t)-v_i(t))}{v_0(t)},&i\in\llbracket1,d-1\rrbracket,\\
v_d'(t)=\frac{v_d(t)(1-v_1(t)-v_d(t))}{v_0(t)}.&
\end{cases}
\end{equation}
Moreover, we have
\begin{equation}\label{eq:v0}
v_0(t)= \mathrm{e}^t\cdot\int_t^\infty v_1(s)\cdot \mathrm{e}^{-s}\mathrm{d}s\quad\text{for all $t\in\mathbb{R}_+$.}
\end{equation}
\end{claim}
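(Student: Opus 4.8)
The plan is to derive the system \eqref{eq:systemv} from \eqref{eq:systemu} by a direct application of the chain rule, and then establish the integral representation \eqref{eq:v0} by recognising that the first equation of \eqref{eq:systemv} is a linear first-order ODE in $v_0$.

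\textbf{Deriving the system.} Since $v_i = u_i \circ \phi$, the chain rule gives $v_i'(t) = \phi'(t)\cdot u_i'(\phi(t))$. The time change $\phi(t) = T(1 - \mathrm{e}^{-(1+t)})$ satisfies $\phi'(t) = T\,\mathrm{e}^{-(1+t)}$, and crucially $T - \phi(t) = T\,\mathrm{e}^{-(1+t)} = \phi'(t)$, so that $\phi'(t)/(T - \phi(t)) = 1$. Plugging $s = \phi(t)$ into each line of \eqref{eq:systemu} and multiplying by $\phi'(t)$, the factor $\frac{1}{T-s}$ in \eqref{eq:systemu} is exactly cancelled, and one reads off \eqref{eq:systemv} with $v_i(t) = u_i(\phi(t))$ in place of $u_i(s)$. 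I would also note in passing that $\phi$ is a smooth increasing bijection from $\mathbb{R}_+$ onto $[t_0, T[$ with $\phi(0) = T(1 - \mathrm{e}^{-1}) = t_0$, so the $v_i$ are well-defined and (by Proposition~\ref{prop:absmon}) strictly positive on all of $\mathbb{R}_+$; the uniform bound $v_i \le C$ was already recorded before the claim.

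\textbf{The integral representation.} The first line $v_0' = v_0 - v_1$ is linear in $v_0$: rewriting it as $(\mathrm{e}^{-t} v_0(t))' = -\mathrm{e}^{-t} v_1(t)$ and integrating from $t$ to $S$ gives $\mathrm{e}^{-S} v_0(S) - \mathrm{e}^{-t} v_0(t) = -\int_t^S \mathrm{e}^{-s} v_1(s)\,\mathrm{d}s$. Letting $S \to \infty$: since $0 \le v_0 \le C$, the boundary term $\mathrm{e}^{-S} v_0(S) \to 0$, and the integral converges (again by $v_1 \le C$) to $\int_t^\infty \mathrm{e}^{-s} v_1(s)\,\mathrm{d}s$. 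Rearranging yields $v_0(t) = \mathrm{e}^t \int_t^\infty v_1(s)\,\mathrm{e}^{-s}\,\mathrm{d}s$, which is \eqref{eq:v0}.

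\textbf{Main obstacle.} There is no serious obstacle here; the only points requiring a word of care are the two things that make the $S \to \infty$ limit legitimate — the vanishing of the boundary term and the convergence of the improper integral — both of which follow immediately from the uniform boundedness of the $v_i$ established prior to the claim. One could alternatively avoid even this by directly verifying that the right-hand side of \eqref{eq:v0}, call it $w(t)$, satisfies $w' = w - v_1$ and agrees with $v_0$ at some point, but the integrating-factor computation is the cleanest route. If desired, consistency with the original definitions can be double-checked: $v_0(t) = u_0(\phi(t)) = \frac{1}{(T - \phi(t)) y(\phi(t))}$, and \eqref{eq:v0} is then just the statement \eqref{eq:systemu} for $u_0$ transported through $\phi$ and integrated, so the two claims are really one computation.
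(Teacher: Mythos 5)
Your proposal is correct and matches the paper's argument essentially line for line: the cancellation $\phi'(t)=T-\phi(t)$ gives \eqref{eq:systemv}, and the integrating factor $\mathrm{e}^{-t}$ together with the boundedness of $v_0$ and $v_1$ gives \eqref{eq:v0}. Nothing is missing.
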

\begin{proof}
Observe that $\phi$ is a smooth increasing bijection from $\mathbb{R}_+$ to $[t_0,T[$, and that
\[\phi'(t)=T\cdot \mathrm{e}^{-(1+t)}=T-\phi(t)\quad\text{for all $t\in\mathbb{R}_+$.}\]
Thus, for each $i\in\llbracket0,d\rrbracket$, we have
\[v_i'(t)=u_i'(\phi(t))\cdot\phi'(t)=u_i'(\phi(t))\cdot(T-\phi(t))\quad\text{for all $t\in\mathbb{R}_+$,}\]
and \eqref{eq:systemv} readily follows from \eqref{eq:systemu}.
Now, to prove \eqref{eq:v0}, let us set $w_0(t)=v_0(t)\cdot \mathrm{e}^{-t}$ for all $t\in\mathbb{R}_+$.
By \eqref{eq:systemv}, we have
\[w_0'(t)=v_0'(t)\cdot \mathrm{e}^{-t}-v_0(t)\cdot \mathrm{e}^{-t}=-v_1(t)\cdot \mathrm{e}^{-t}\quad\text{for all $t\in\mathbb{R}_+$.}\]
Moreover, since $v_0$ is bounded, we have $w_0(t)\rightarrow0$ as $t\to\infty$.
Thus, integrating the above equation between $t$ and infinity, we obtain
\[0-w_0(t)=-\int_t^\infty v_1(s)\cdot \mathrm{e}^{-s}\mathrm{d}s\quad\text{for all $t\in\mathbb{R}_+$,}\]
which yields \eqref{eq:v0}.
\end{proof}

Finally, we introduce one last time change $\psi$, defined as follows.
Observe that the function
\[t\in\mathbb{R}_+\longmapsto\int_0^t\frac{\mathrm{d}s}{v_0(s)}\]
is a smooth increasing bijection from $\mathbb{R}_+$ to $\mathbb{R}_+$ (since $v_0$ is strictly positive and bounded, the function $1/v_0$ is strictly positive and bounded away from zero), and let $\psi:\mathbb{R}_+\rightarrow\mathbb{R}_+$ be its inverse function.
For each $i\in\llbracket1,d\rrbracket$, we set
\begin{equation}\label{eq:w}
w_i(t)=v_i(\psi(t))\quad\text{for all $t\in\mathbb{R}_+$.}
\end{equation}

\begin{claim}\label{claim:3}
The functions $w_1,\ldots,w_d$ satisfy the following system of differential equations: for all $t\in\mathbb{R}_+$, we have
\begin{equation}\label{eq:systemw}
\begin{cases}
w_i'(t)=w_i(t)(w_{i+1}(t)-w_1(t)-w_i(t)),&i\in\llbracket1,d-1\rrbracket,\\
w_d'(t)=w_d(t)(1-w_1(t)-w_d(t)).&
\end{cases}
\end{equation}
\end{claim}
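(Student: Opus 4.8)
The plan is a one-step chain-rule computation that reduces \eqref{eq:systemw} to \eqref{eq:systemv}. First I would record the key property of the time change $\psi$. By construction, $\psi$ is the inverse of the smooth increasing bijection $t\in\mathbb{R}_+\mapsto\int_0^t\mathrm{d}s/v_0(s)$, whose derivative is $1/v_0>0$; hence, by the inverse function theorem, $\psi$ is itself smooth and strictly increasing, with
\[\psi'(t)=v_0(\psi(t))\quad\text{for all }t\in\mathbb{R}_+.\]
In particular, the functions $w_1,\dots,w_d$ introduced in \eqref{eq:w} are smooth on $\mathbb{R}_+$.

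Next, for each $i\in\llbracket1,d\rrbracket$ I would differentiate $w_i(t)=v_i(\psi(t))$ using the chain rule and the identity above:
\[w_i'(t)=v_i'(\psi(t))\cdot\psi'(t)=v_0(\psi(t))\cdot v_i'(\psi(t))\quad\text{for all }t\in\mathbb{R}_+.\]
It then remains to plug in \eqref{eq:systemv}. Clearing the (strictly positive) denominator $v_0$ in that system gives, for all $s\in\mathbb{R}_+$, the identities $v_0(s)\cdot v_i'(s)=v_i(s)\bigl(v_{i+1}(s)-v_1(s)-v_i(s)\bigr)$ for $i\in\llbracket1,d-1\rrbracket$ and $v_0(s)\cdot v_d'(s)=v_d(s)\bigl(1-v_1(s)-v_d(s)\bigr)$. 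Evaluating these at $s=\psi(t)$ and recalling that $w_j(t)=v_j(\psi(t))$ for each $j$ yields precisely \eqref{eq:systemw}.

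There is no genuine obstacle here: the only point deserving a moment's care is the identity $\psi'=v_0\circ\psi$, which is exactly the reason the change of time $\psi$ was introduced — it is tailored to absorb the common factor $1/v_0$ multiplying the right-hand sides of the equations for $v_1,\dots,v_d$ in \eqref{eq:systemv}. A secondary observation worth making explicit is that the differential equation satisfied by $v_0$ plays no role in this step, so that the resulting autonomous system \eqref{eq:systemw} closes on $w_1,\dots,w_d$ alone; this is what makes it amenable to the Lotka--Volterra analysis of the next section.
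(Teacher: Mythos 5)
Your proof is correct and follows exactly the same route as the paper's: the chain rule $w_i'=(v_i'\circ\psi)\cdot\psi'$ combined with the identity $\psi'=v_0\circ\psi$ coming from the inverse function theorem, which absorbs the common factor $1/v_0$ in \eqref{eq:systemv}. The extra remarks (smoothness of $\psi$, the system closing on $w_1,\dots,w_d$ alone) are accurate but not needed beyond what the paper already records.
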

\begin{proof}
By construction, we have
\[\psi'(t)=v_0(\psi(t))\quad\text{for all $t\in\mathbb{R}_+$.}\]
Thus, for each $i\in\llbracket1,d\rrbracket$, we have
\[w_i'(t)=v_i'(\psi(t))\cdot\psi'(t)=v_i'(\psi(t))\cdot v_0(\psi(t))\quad\text{for all $t\in\mathbb{R}_+$,}\]
and \eqref{eq:systemw} readily follows from \eqref{eq:systemv}.
\end{proof}

We recognise in \eqref{eq:systemw} a Lotka--Volterra system, from which it is reasonable to hope obtaining useful information on $w_1,\ldots,w_d$.
Namely, we obtain the following information.

\begin{prop}\label{prop:climax}
\begin{itemize}
\item For $d\in\llbracket1,10\rrbracket$, the following holds: for each $i\in\llbracket1,d\rrbracket$, we have
\[w_i (t)\longrightarrow\frac{i}{d+1}\quad\text{as $t\to\infty$.}\]

\item Regardless of the value of $d$, there exists a constant $c>0$ such that for each $i\in\llbracket1,d\rrbracket$, we have $w_i(t)\geq c$ for all $t\in\mathbb{R}_+$.
Moreover, for each $i\in\llbracket1,d\rrbracket$, we have
\[\frac{1}{t}\cdot\int_0^tw_i(s)\mathrm{d}s\longrightarrow\frac{i}{d+1}\quad\text{as $t\to\infty$.}\]
\end{itemize}
\end{prop}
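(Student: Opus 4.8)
\textbf{Setting up the Lotka--Volterra analysis.} The plan is to study the autonomous system \eqref{eq:systemw} on the positive orthant, identify its interior equilibrium, and exploit the structure of the tridiagonal interaction matrix. First I would observe that the ``bulk stationary point'' is $w_i^\star=i/(d+1)$ for $i\in\llbracket1,d\rrbracket$: plugging in gives $w_{i+1}^\star-w_1^\star-w_i^\star=\tfrac{i+1}{d+1}-\tfrac{1}{d+1}-\tfrac{i}{d+1}=0$ for $i\in\llbracket1,d-1\rrbracket$, and $1-w_1^\star-w_d^\star=1-\tfrac{1}{d+1}-\tfrac{d}{d+1}=0$, so indeed each right-hand side vanishes. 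Before anything else I would need the lower bound $w_i(t)\ge c$: since $w_i(\phi^{-1}\circ\psi$-preimage$)$ are just reparametrisations of $v_i=u_i\circ\phi$, the content is that the $u_i$ stay bounded away from $0$ near $T$. This requires a companion to Proposition \ref{prop:roughbounds} — a $\liminf$ lower bound on $y^{(i)}/(y\,y^{(i-1)})$ and an upper bound on $(T-t)y(t)$ — which I would derive by the same bootstrap, or alternatively deduce \emph{a posteriori} from the Lotka--Volterra dynamics (an orbit in the open orthant that does not blow up and whose coordinates are bounded above cannot have a coordinate decaying to $0$, because the logarithmic derivative $(\ln w_i)' = w_{i+1}-w_1-w_i$ is bounded, forcing at worst exponential decay, which combined with the integral identity \eqref{eq:v0} and boundedness gives a contradiction). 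I would prefer the direct ODE argument on $y$ for cleanliness.

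\textbf{The convergence claim for $d\le 10$ via a Lyapunov function.} For the first bullet I would follow the classical Volterra recipe: seek constants $\lambda_1,\dots,\lambda_d>0$ and set
\[
V(w)=\sum_{i=1}^d \lambda_i\left(w_i-w_i^\star-w_i^\star\ln\frac{w_i}{w_i^\star}\right),
\]
which is nonnegative, vanishes only at $w^\star$, and is proper on the orthant. Writing $p_i=w_i-w_i^\star$ and differentiating along \eqref{eq:systemw}, the factors $w_i^\star/w_i$ cancel the $w_i$ in front and one gets $\dot V=\sum_i \lambda_i p_i\,(p_{i+1}-p_1-p_i)$ (with the convention $p_{d+1}=0$), a quadratic form in $p$. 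Symmetrising, $\dot V=-\tfrac12 p^\top M p$ where $M$ is exactly the matrix \eqref{eq:onemillion}. By the discussion preceding the statement, positive $\lambda_i$ making $M$ positive-definite exist precisely for $d\le 10$; fixing such $\lambda$, $\dot V\le 0$ with equality only at $w^\star$. Then LaSalle's invariance principle (the trajectory is bounded below by $c$ and above by $C$, hence precompact in the open orthant, and the only invariant set in $\{\dot V=0\}$ is $\{w^\star\}$) gives $w_i(t)\to i/(d+1)$. I should double-check that the reparametrisation $\psi$ covers all of $\mathbb{R}_+$ so that ``$t\to\infty$'' in $w$-time is genuine — this is already noted in Claim \ref{claim:3}'s setup, since $1/v_0$ is bounded away from $0$, so $\psi$ maps $\mathbb{R}_+$ onto $\mathbb{R}_+$.

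\textbf{The Cesàro claim for all $d$ via average Lyapunov functions.} For the second bullet, when $M$ fails to be definite I cannot get pointwise convergence, but the same $V$ still yields time-averaged convergence. Here I would use that even for $d\ge 11$ one can still find $\lambda_i>0$ with $M$ \emph{positive-semidefinite} — or, more robustly, use the standard trick for Lotka--Volterra persistence: from $(\ln w_i)'=w_{i+1}-w_1-w_i$ integrated over $[0,t]$ and divided by $t$, the left side is $\tfrac1t(\ln w_i(t)-\ln w_i(0))\to 0$ because $c\le w_i(t)\le C$; this gives, for $i\in\llbracket1,d-1\rrbracket$, $\overline{w}_{i+1}(t)-\overline{w}_1(t)-\overline{w}_i(t)\to 0$ and for $i=d$, $1-\overline{w}_1(t)-\overline{w}_d(t)\to 0$, where $\overline{w}_i(t)=\tfrac1t\int_0^t w_i$. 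This is a linear triangular system in the $d$ limiting averages (any subsequential limits, which exist by boundedness): $\overline w_{i+1}=\overline w_1+\overline w_i$ and $\overline w_1+\overline w_d=1$, whose unique solution is $\overline w_i=i/(d+1)$. Hence $\tfrac1t\int_0^t w_i(s)\,\mathrm ds\to i/(d+1)$ for every $i$. Translating back through $\psi$ and $\phi$ to obtain \eqref{eq:goal} is then a change-of-variables bookkeeping step.

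\textbf{Main obstacle.} The genuinely delicate point is the uniform lower bound $w_i\ge c$ (equivalently the \emph{a priori} positivity/persistence of the trajectory), since without it neither LaSalle nor the division of $\ln w_i(t)/t$ is justified; everything else is either the cited linear-algebra input (feasibility of \eqref{eq:onemillion}) or routine. I expect to spend most of the effort establishing, by bootstrapping the integral equations for $y$ exactly as in Proposition \ref{prop:roughbounds} but in the other direction, the matching lower bounds $\liminf (T-t)y(t)\le$ something finite and $\liminf y^{(i)}/(y\,y^{(i-1)})>0$.
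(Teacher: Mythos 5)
There is a genuine gap, and it sits exactly where you flagged it: the uniform lower bound $w_i(t)\geq c$ (permanence of the Lotka--Volterra system). Everything else in your outline matches the paper: the Lyapunov function $V(w)=\sum_i\lambda_i\bigl(w_i-w_i^*-w_i^*\ln(w_i/w_i^*)\bigr)$ with the positive-definiteness of \eqref{eq:onemillion} for $d\leq10$, LaSalle, and the Cesàro argument via $\tfrac{1}{t}\ln(w_i(t)/w_i(0))\to0$ followed by inverting the triangular linear system for the averages. But the Cesàro step, and the second bullet of the statement itself, need $w_i$ bounded away from $0$, and neither of your two proposed routes delivers it. The a posteriori dynamical sketch is wrong as stated: a bounded logarithmic derivative $(\ln w_i)'=w_{i+1}-w_1-w_i$ is entirely consistent with $w_i(t)\to0$ (e.g.\ exponential decay has constant logarithmic derivative), and boundedness from above gives no contradiction; the identity \eqref{eq:v0} only controls $v_0$ in terms of $v_1$, not the other coordinates. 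The direct route --- reverse-bootstrapping the integral equations for $y$ to get $\liminf y^{(i)}/(y\,y^{(i-1)})>0$ and $\limsup(T-t)y(t)<\infty$ --- is precisely what the paper does \emph{not} do (it remarks that the lower bound ``requires more work''), and it is not clear how to run Proposition \ref{prop:roughbounds}'s monotonicity argument in the other direction; you have not carried it out.

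The paper closes this gap with the permanence theory for Lotka--Volterra systems: Hofbauer's method of average Lyapunov functions, in the form of Jansen's criterion. One takes $P(w)=w_1\cdots w_d$ and checks that $\Psi(w)=\sum_i\bigl(b_i-\sum_j a_{i,j}w_j\bigr)=1-(d+1)w_1$ is strictly positive at every stationary point on the boundary of $\mathbb{R}_+^d$; a short computation shows the only stationary point with $w_1>0$ is the interior point $w^*$, so on the boundary $w_1=0$ and $\Psi=1>0$. This yields $w_i(t)\geq c$ for every interior solution, after which your Cesàro argument goes through verbatim. Note also that for the $d\leq10$ bullet your invocation of LaSalle quietly uses the same unproven lower bound (``the trajectory is bounded below by $c$''); there it can be rescued without permanence because $V$ is proper on the open orthant, so $\dot V\leq0$ already confines the orbit to a compact subset of $(\mathbb{R}_+^*)^d$ --- but for general $d$ you have no such Lyapunov function, and the permanence input is indispensable.
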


We postpone the proof of Proposition \ref{prop:climax} to the next section.

\paragraph{Back to $y$.}
We conclude this section by explaining how the above information on $w_1,\ldots,w_d$ can be traced back to $y$ and its derivatives to yield the results of Theorem \ref{thm:main}.

\begin{proof}[Proof of Theorem \ref{thm:main} assuming Proposition \ref{prop:climax}]
\begin{itemize}
\item Let $ d \in \llbracket1,10\rrbracket$.
By Proposition \ref{prop:climax}, for each $i\in\llbracket1,d\rrbracket$, we have $v_i(t)=w_i\left(\psi^{-1}(t)\right)\rightarrow i/(d+1)$ as $t\to\infty$.
In particular, by \eqref{eq:v0}, we deduce that $v_0(t)\rightarrow1/(d+1)$ as $t\to\infty$.
Now, for each $i\in\llbracket0,d\rrbracket$, we get that $u_i(t)=v_i\left(\phi^{-1}(t)\right)\rightarrow(1\vee i)/(d+1)$ as $t\to T^-$.
In particular, we obtain that
\[(T-t)\cdot y(t)=\frac{1}{u_0(t)}\longrightarrow d+1\quad\text{as $t\to T^-$.}\]
Moreover, observe that for each $j\in\llbracket1,d\rrbracket$, we have
\[\prod_{i=1}^ju_i(t)=\frac{y^{(j)}(t)}{y(t)^{j+1}}\quad\text{for all $t\in[t_0,T[$,}\]
hence
\[(T-t)^{j+1}\cdot y^{(j)}(t)=((T-t)\cdot y(t))^{j+1}\cdot\prod_{i=1}^ju_i(t)\underset{t\to T^-}{\longrightarrow}(d+1)^{j+1}\cdot\prod_{i=1}^j\frac{i}{d+1}=(d+1)\cdot j!.\]

\item Let $d\in\mathbb{N}^*$. By Proposition \ref{prop:climax}, for each $i\in\llbracket1,d\rrbracket$, we have $v_i(t)=w_i\left(\psi^{-1}(t)\right)\geq c$ for all $t\in\mathbb{R}_+$.
In particular, by \eqref{eq:v0}, we deduce that $v_0(t)\geq c$ for all $t\in\mathbb{R}_+$.
Now, for each $i\in\llbracket0,d\rrbracket$, we get that $u_i(t)=v_i\left(\phi^{-1}(t)\right)\geq c$ for all $t\in[t_0,T[$.
Moreover, recall that by Proposition \ref{prop:roughbounds}, there exists a constant $C>0$ such that $u_i(t)=v_i\left(\phi^{-1}(t)\right)\leq C$ for all $t\in[t_0,T[$.
In particular, we obtain that
\[(T-t)\cdot y(t)=\frac{1}{u_0(t)}\in\left[\frac{1}{C},\frac{1}{c}\right]\quad\text{for all $t\in[t_0,T[$.}\]
Furthermore, for each $j\in\llbracket1,d\rrbracket$, we get that
\[(T-t)^{j+1}\cdot y^{(j)}(t)=((T-t)\cdot y(t))^{j+1}\cdot\prod_{i=1}^ju_i(t)\in\left[\frac{c^j}{C^{j+1}},\frac{C^j}{c^{j+1}}\right]\quad\text{for all $t\in[t_0,T[$.}\]
This completes the proof of the first assertion.

Now, let us prove the second assertion; namely, that
\[\frac{1}{\ln(1/(T-t))}\cdot\int_0^ty(s)\mathrm{d}s\longrightarrow d+1\quad\text{as $t\to T^-$.}\]
Since $\ln(1/(T-\phi(t)))\sim t$ as $t\to\infty$, this is equivalent to showing that
\[\frac{1}{t}\cdot\int_{\phi(0)}^{\phi(t)}y(s)\mathrm{d}s\longrightarrow d+1\quad\text{as $t\to\infty$.}\]
Now, for each $t\in\mathbb{R}_+$, we have
\[\int_{\phi(0)}^{\phi(t)}y(s)\mathrm{d}s=\int_{\phi(0)}^{\phi(t)}\frac{\mathrm{d}s}{(T-s)\cdot u_0(s)}=\int_0^t\frac{\phi'(r)\mathrm{d}r}{(T-\phi(r))\cdot u_0(\phi(r))}=\int_0^t\frac{\mathrm{d}r}{v_0(r)}=\psi^{-1}(t).\]
So we want to prove that $\left.\psi^{-1}(t)\middle/t\right.\rightarrow d+1$ as $t\to\infty$, or equivalently that
\begin{equation}\label{eq:meanchi}
\frac{\psi(t)}{t}\longrightarrow\frac{1}{d+1}\quad\text{as $t\to\infty$.}
\end{equation}
To prove \eqref{eq:meanchi}, observe that on the one hand, since $\psi'=v_0(\psi(\cdot))$, we have
\[\int_0^tw_1(s)\mathrm{d}s=\int_0^t\frac{v_1(\psi(s))}{v_0(\psi(s))}\cdot\psi'(s)\mathrm{d}s=\int_0^{\psi(t)}\frac{v_1(r)}{v_0(r)}\mathrm{d}r\quad\text{for all $t\in\mathbb{R}_+$,}\]
so that by assumption, we have
\[\frac{1}{t}\cdot\int_0^{\psi(t)}\frac{v_1(s)}{v_0(s)}\mathrm{d}s=\frac{1}{t}\cdot\int_0^tw_1(s)\mathrm{d}s\longrightarrow\frac{1}{d+1}\quad\text{as $t\to\infty$.}\]
On the other hand, by integrating $v_0'/v_0=(v_0-v_1)/v_0$ from $0$ to $\psi(t)$, we obtain
\[\ln(v_0(\psi(t)))-\ln(v_0(0))=\psi(t)-\int_0^{\psi(t)}\frac{v_1(s)}{v_0(s)}\mathrm{d}s\quad\text{for all $t\in\mathbb{R}_+$,}\]
which can be rearranged as
\[\frac{\psi(t)}{t}=\frac{1}{t}\cdot\int_0^{\psi(t)}\frac{v_1(s)}{v_0(s)}\mathrm{d}s+\frac{1}{t}\cdot\ln\left(\frac{v_0(\psi(t))}{v_0(0)}\right)\quad\text{for all $t\in\mathbb{R}_+^*$.}\]
Finally, since $v_0$ is bounded away from zero and infinity, we have
\[\frac{1}{t}\cdot\ln\left(\frac{v_0(\psi(t))}{v_0(0)}\right)\longrightarrow0\quad\text{as $t\to\infty$,}\]
and we deduce that \eqref{eq:meanchi} holds, which completes the proof.
\end{itemize}
\end{proof}

\section{Study of the Lotka--Volterra system}\label{sec:lotkavolterra}

In this section, we study the long-time behaviour of solutions to the Lotka--Volterra system that appears in Claim \ref{claim:3}:
\begin{equation}\label{eq:lotkavolterra}
\begin{cases}
w_i'(t)=w_i(t)(w_{i+1}(t)-w_1(t)-w_i(t)),&i\in\llbracket1,d-1\rrbracket,\\
w_d'(t)=w_d(t)(1-w_1(t)-w_d(t)),&
\end{cases}\quad t\in\mathbb{R}_+,
\end{equation}
with initial condition $w(0)\in\left(\mathbb{R}_+^*\right)^d$.
The main result of this section is stated in Proposition \ref{prop:lessbasic} below.
%In particular, it implies Proposition \ref{prop:climax}, which yields the results of Theorem \ref{thm:main}.

Before diving into the study of \eqref{eq:lotkavolterra}, let us emphasise the following point: in the previous section, the letters $w_1,\ldots,w_d$ referred to the precise functions defined in \eqref{eq:w}.
By Claim \ref{claim:3}, these functions satisfy \eqref{eq:lotkavolterra}, however with a specific initial condition in $\left(\mathbb{R}_+^*\right)^d$ on which we do not have explicit information.
Therefore, in this section, we study the long-time behaviour of all solutions to the Lotka--Volterra system \eqref{eq:lotkavolterra} with initial condition in $\left(\mathbb{R}_+^*\right)^d$.
From now on, we will use the letter $w=(w_1,\ldots,w_d)$ as a variable for the system \eqref{eq:lotkavolterra}, and to denote its solutions.
The system \eqref{eq:lotkavolterra} may be rewritten in a concise way as
\[w'(t)=F(w(t)),\quad t\in\mathbb{R}_+,\]
 where ${F=(F_1,\ldots,F_d):\mathbb{R}^d\rightarrow\mathbb{R}^d}$ is defined by:
\begin{equation}\label{eq:F}
\begin{cases}
F_i(w)=w_i(w_{i+1}-w_1-w_i),&i\in\llbracket1,d-1\rrbracket,\\
F_d(w)=w_d(1-w_1-w_d),&
\end{cases}\quad\text{for all $w=(w_1,\ldots,w_d)\in\mathbb{R}^d$.}
\end{equation}
Contrary to the previous section where we considered a specific solution of \eqref{eq:lotkavolterra}, we do not know a priori that every maximal solution of \eqref{eq:lotkavolterra} with initial condition in $\left(\mathbb{R}_+^*\right)^d$ is global, and has strictly positive and bounded coordinates: our first task is to prove that this is indeed the case.

\begin{prop}\label{prop:basic}
Fix $w^0=\left(w^0_1,\ldots,w^0_d\right)\in\left(\mathbb{R}_+^*\right)^d$, and let $w=(w_1,\ldots,w_d):{[0,\tau[}\rightarrow\mathbb{R}^d$ be the maximal solution of \eqref{eq:lotkavolterra} with initial condition $w(0)=w^0$.
This solution has strictly positive and bounded coordinates, and is global: we have $\tau=\infty$, and there exists a constant $C>0$ such that for each $i\in\llbracket1,d\rrbracket$, we have $0<w_i(t)\leq C$ for all $t\in\mathbb{R}_+$.
\end{prop}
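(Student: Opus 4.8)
The plan is to establish the three properties — positivity, boundedness, global existence — in that logical order, since each one feeds into the next. The key structural observation is that the faces $\{w_i = 0\}$ are invariant for the vector field $F$: indeed $F_i(w)$ contains $w_i$ as a factor, so if $w_i(t_0) = 0$ then $w_i \equiv 0$. By uniqueness in the Cauchy--Lipschitz theorem, a solution starting in the open orthant $(\mathbb{R}_+^*)^d$ cannot reach any of these faces in finite time, hence $w_i(t) > 0$ for all $t \in [0,\tau[$ and all $i$. (Alternatively and equivalently, one writes $w_i(t) = w_i^0 \exp\big(\int_0^t g_i(w(s))\,\mathrm{d}s\big)$ where $g_i(w) = w_{i+1} - w_1 - w_i$ for $i < d$ and $g_d(w) = 1 - w_1 - w_d$, which manifestly stays positive as long as the solution exists and the integrand is finite.)

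\textbf{Boundedness.} Once positivity is in hand, I would bound the coordinates one at a time, starting from $w_d$ and working downwards, mirroring the iterated-integration structure of the system. For $w_d$: from $w_d' = w_d(1 - w_1 - w_d) \le w_d(1 - w_d)$ (using $w_1 > 0$), the standard logistic comparison gives $w_d(t) \le \max\{w_d^0, 1\}$ for all $t$ in the interval of existence. Then for $w_{d-1}$: we have $w_{d-1}' = w_{d-1}(w_d - w_1 - w_{d-1}) \le w_{d-1}(w_d - w_{d-1})$, and since $w_d$ is now known to be bounded by some constant, say $M_d$, the same logistic-type comparison (with the constant $1$ replaced by $M_d$) yields $w_{d-1}(t) \le \max\{w_{d-1}^0, M_d\}$. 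Iterating down to $w_1$ produces constants $M_d \ge M_{d-1} \ge \cdots \ge M_1$ with $w_i(t) \le M_i$ throughout; taking $C = M_1$ (or simply $C = \max_i M_i$) gives the uniform bound claimed. The only care needed is to phrase the logistic comparison cleanly: if $\varphi' \le \varphi(A - \varphi)$ with $\varphi > 0$ and $A > 0$ constant, then $\varphi$ cannot exceed $\max\{\varphi(0), A\}$, which one checks by noting $\varphi' < 0$ whenever $\varphi > A$.

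\textbf{Global existence.} Finally, with all coordinates confined to a fixed compact box $[0,C]^d$ (in fact a compact subset of the open orthant once we also record the lower bounds $w_i(t) \ge w_i^0 e^{-(d+1)Ct}$ on any finite interval, though for global existence the upper bound suffices together with positivity), the solution cannot blow up: by the standard escape-from-every-compact-set criterion for maximal solutions of an ODE with locally Lipschitz (here smooth polynomial) right-hand side, if $\tau < \infty$ then $w(t)$ would have to leave every compact subset of $\mathbb{R}^d$ as $t \to \tau^-$, contradicting $w(t) \in [0,C]^d$. Hence $\tau = \infty$.

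\textbf{Main obstacle.} There is no serious obstacle here — this is a warm-up proposition. The only mild subtlety is getting the order of operations right (positivity must be secured before the logistic comparisons, since those rely on $w_1 > 0$ to drop the $-w_1$ term, and boundedness must be secured before invoking the compactness criterion for global existence), and making sure the cascade of logistic comparisons from $w_d$ down to $w_1$ is presented uniformly rather than as $d$ separate ad hoc arguments. I would state the one-line logistic comparison lemma once and apply it $d$ times.
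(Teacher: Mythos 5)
Your proposal is correct, but the boundedness step takes a genuinely different route from the paper. The paper controls all coordinates simultaneously by showing that the single function $M(t)=w_1(t)\vee\ldots\vee w_d(t)\vee 1$ is non-increasing: at any index achieving the maximum one has $w_i'(t)\leq w_i(t)(M(t)-w_1(t)-w_i(t))=-w_1(t)w_i(t)<0$ (the extra ``$\vee\,1$'' handling the constant $1$ in the last equation), and a connectedness argument then propagates the bound $w_i(t)\leq M(0)$. You instead exploit the near-triangular coupling structure: coordinate $i$ only sees $w_{i+1}$ and $w_1$, the $-w_1w_i$ term has a favourable sign and can be discarded, and a logistic comparison cascades from $w_d$ down to $w_1$. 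Both arguments are elementary and yield comparable explicit constants; the paper's maximum-function argument is arguably more robust (it would survive perturbations of the coupling that destroy the triangular structure), while yours isolates more clearly why this particular system is dissipative, namely that the top equation is self-limiting and each lower equation is slaved to the one above it. Two small remarks: your chain of inequalities should read $M_1\geq M_{d-1}\geq\cdots\geq M_d$ rather than the reverse, since $M_{i}=\max\{w_i^0,M_{i+1}\}\geq M_{i+1}$ (your final choice $C=\max_iM_i=M_1$ is nonetheless correct); and for positivity, your parenthetical integral-representation argument $w_i(t)=w_i^0\exp\bigl(\int_0^tg_i(w(s))\,\mathrm{d}s\bigr)$ is the cleaner of your two variants and is exactly the content of the standard invariance result the paper cites, whereas the ``invariance of faces plus uniqueness'' phrasing needs the extra (routine) step of constructing a solution of the full system lying in the face through the hypothetical hitting point.
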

\begin{proof}
The fact that $\left(\mathbb{R}_+^*\right)^d$ is stable by \eqref{eq:lotkavolterra} is a standard result on Lotka--Volterra systems (see, e.g, \cite[Theorem 6]{baigent}).
Now, fix $w^0=\left(w^0_1,\ldots,w^0_d\right)\in\left(\mathbb{R}_+^*\right)^d$, and let
\[w=(w_1,\ldots,w_d):{[0,\tau[}\longrightarrow\left(\mathbb{R}_+^*\right)^d\]
be the maximal solution of \eqref{eq:lotkavolterra} with initial condition $w(0)=w^0$, which we know to have strictly positive coordinates by the previous point.
We claim that the function
\[M:t\in{[0,\tau[}\longmapsto w_1(t)\vee\ldots\vee w_d(t)\vee1\]
is non-increasing.
Indeed, fix $t_0\in[0,\tau[$, and consider the set
\[O=\left\{t\in{[t_0,\tau[}:\text{$M(s)\leq M(t_0)$ for all $s\in[t_0,t]$}\right\}.\]
To prove the claim, it suffices to show that $O=[t_0,\tau[$.
First, observe that $O$ is a closed subset of $[t_0,\tau[$ that contains $t_0$.
Moreover, we claim that $O$ is also an open subset of $[t_0,\tau[$.
By the connectedness of $[t_0,\tau[$, this entails that $O=[t_0,\tau[$, as desired.
To prove that $O$ is open, fix $t\in O$, and let us show that there exists $\varepsilon>0$ such that $[t_0,t+\varepsilon]\subset O$.
By the definition of $O$, we automatically have $[t_0,t]\subset O$, so let us prove the complementary inclusion.
We set ${I=\left\{i\in\llbracket1,d\rrbracket:w_i(t)=M(t)\right\}}$.
On the one hand, for each $i\in\llbracket1,d\rrbracket\setminus I$, we have $w_i(t)<M(t)$ by definition, so there exists $\varepsilon_i>0$ such that $w_i(s)\leq M(t)$ for all $s\in[t,t+\varepsilon_i]$.
On the other hand, for each $i\in I$, we have
\[w_i'(t)\leq w_i(t)(M(t)-w_1(t)-w_i(t))=w_i(t)\cdot(-w_1(t))<0,\]
so there exists $\varepsilon_i>0$ such that $w_i'(s)\leq0$ for all $s\in[t,t+\varepsilon_i]$.
Thus, the function $w_i$ is non-increasing on $[t,t+\varepsilon_i]$, and we get that $w_i(s)\leq w_i(t)=M(t)$ for all $s\in[t,t+\varepsilon_i]$.
Setting ${\varepsilon=\varepsilon_1\wedge\ldots\wedge\varepsilon_d}$, we conclude that $w_i(s)\leq M(t)\leq M(t_0)$ for all $s\in[t,t+\varepsilon]$, which shows that $[t,t+\varepsilon]\subset O$, as we wanted to prove.
In particular, we obtain that for each $i\in\llbracket1,d\rrbracket$, we have $0<w_i(t)\leq M(0)$ for all $t\in[0,\tau[$.
Finally, since $w$ is maximal, the fact that it is bounded entails that it is global, i.e, that $\tau=\infty$, which completes the proof of the proposition.
\end{proof}

Before coming to the main result of this section, let us introduce some additional notation that will be convenient for the proof.
We set
\[A=\begin{bmatrix}
2&-1&&\\
1&1&\ddots&\\
\vdots&&\ddots&-1\\
1&&&1
\end{bmatrix}\quad\text{and}\quad b=\begin{bmatrix}
0\\
\vdots\\
0\\
1
\end{bmatrix},\]
so that \eqref{eq:lotkavolterra} may be rewritten as
\[\begin{cases}
w_i'(t)=w_i(t)\left(b_i-\sum_{j=1}^da_{i,j}w_j(t)\right),&i\in\llbracket1,d\rrbracket,
\end{cases}\quad t\in\mathbb{R}_+.\]
Observe that the matrix $A$ invertible.
Indeed, if $w\in\mathbb{R}^d$ is such that $Aw=0$, then we have
\[\begin{cases}
2w_1-w_2=0,\\
w_1+w_2-w_3=0,\\
\ldots,\\
w_1+w_{d-1}-w_d=0,\\
w_1+w_d=0,\\
\end{cases}\quad\text{i.e,}\quad\begin{cases}
w_2=2w_1,\\
w_3=w_1+w_2=3w_1,\\
\ldots,\\
w_d=w_1+w_{d-1}=dw_1,\\
(d+1)w_1=0,\\
\end{cases}\]
hence $w=0$.
Moreover, similar manipulations show that
\[A^{-1}\cdot b=\left(\frac{1}{d+1},\ldots,\frac{d}{d+1}\right)=:w^*.\]

The following proposition settles the long-time behaviour of solutions to \eqref{eq:lotkavolterra}.
In particular, it implies Proposition \ref{prop:climax}, which yields the results of Theorem \ref{thm:main}.

\begin{prop}\label{prop:lessbasic}
\begin{itemize}
\item For $d\in\llbracket1,10\rrbracket$, the following holds: for every solution $w:\mathbb{R}_+\rightarrow\left(\mathbb{R}_+^*\right)^d$ of \eqref{eq:lotkavolterra}, we have $w(t)\rightarrow w^*$ as $t\to\infty$.

\item Regardless of the value of $d$, the following holds.
For every solution $w:\mathbb{R}_+\rightarrow\left(\mathbb{R}_+^*\right)^d$ of \eqref{eq:lotkavolterra}, there exists a constant $c>0$ such that for each $i\in\llbracket1,d\rrbracket$, we have $w_i(t)\geq c$ for all $t\in\mathbb{R}_+$.
Moreover, for each $i\in\llbracket1,d\rrbracket$, we have
\[\frac{1}{t}\cdot\int_0^tw_i(s)\mathrm{d}s\longrightarrow\frac{i}{d+1}\quad\text{as $t\to\infty$.}\]
\end{itemize}
\end{prop}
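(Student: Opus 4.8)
The plan is to study the Lotka--Volterra system \eqref{eq:lotkavolterra} via a suitable Lyapunov function centred at the unique interior equilibrium $w^*=A^{-1}b=(1/(d+1),\ldots,d/(d+1))$. Since the system is of Lotka--Volterra type, the natural candidate is the classical Volterra function
\[
V(w)=\sum_{i=1}^d\lambda_i\left(w_i-w_i^*-w_i^*\ln\frac{w_i}{w_i^*}\right),
\]
for positive weights $\lambda_1,\ldots,\lambda_d>0$ to be chosen. This $V$ is nonnegative on $(\mathbb{R}_+^*)^d$, vanishes only at $w^*$, and is proper on $(\mathbb{R}_+^*)^d$ in the sense that its sublevel sets are compact subsets of the open orthant. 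A direct computation (using $w_i'=w_i(b_i-\sum_j a_{i,j}w_j)$ and $b=Aw^*$) gives
\[
\frac{\mathrm{d}}{\mathrm{d}t}V(w(t))=-\sum_{i,j}\lambda_i a_{i,j}(w_i-w_i^*)(w_j-w_j^*)=-x^\top\!\left(\tfrac12(\Lambda A+A^\top\Lambda)\right)x,
\]
where $x=w-w^*$ and $\Lambda=\mathrm{diag}(\lambda_1,\ldots,\lambda_d)$. Spelling out $\tfrac12(\Lambda A+A^\top\Lambda)$ with the explicit $A$ from the excerpt produces exactly the matrix \eqref{eq:onemillion}. So for $d\in\llbracket1,10\rrbracket$ one fixes $\lambda_1,\ldots,\lambda_d>0$ making \eqref{eq:onemillion} positive-definite (such a choice exists precisely in this range, as noted), and then $\dot V\le -\kappa\|x\|^2\le0$ along trajectories, with equality only at $w^*$.

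From here the first bullet follows by a standard LaSalle/Lyapunov argument: $V(w(t))$ is nonincreasing and bounded below, hence converges; the trajectory stays in a fixed sublevel set of $V$, which is a compact subset of $(\mathbb{R}_+^*)^d$, so by Proposition~\ref{prop:basic} it is global and relatively compact there; and since $\dot V<0$ off $w^*$, the only point in the $\omega$-limit set is $w^*$, giving $w(t)\to w^*$. One can also get convergence more cheaply by integrating $\dot V\le-\kappa\|x\|^2$ to conclude $\int_0^\infty\|w(s)-w^*\|^2\mathrm{d}s<\infty$, then combining with boundedness of $w'$ (hence uniform continuity of $\|w-w^*\|^2$) via Barbălat's lemma. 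Either way, $w(t)\to w^*$.

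For the second bullet, valid for all $d$, one needs two things: a uniform lower bound $w_i(t)\ge c>0$, and convergence of the time-averages to $w^*$. For the lower bound, the idea is that $V$ is still a Lyapunov function in the weaker sense that, even when \eqref{eq:onemillion} is only positive-\emph{semi}definite (or not even that), one can instead use the \emph{average Lyapunov function} method (Hofbauer): since $w^*$ is the unique interior rest point and the boundary faces of the orthant are not chain-recurrent for this system, the quantity $P(w)=\prod_i w_i^{\lambda_i}$ (or rather $\dot{(\ln P)}=\sum\lambda_i(b_i-\sum_j a_{i,j}w_j)$, which is an affine function vanishing at $w^*$) can be used to show the system is \emph{permanent}, i.e. there is $c>0$ with $\liminf_{t\to\infty}w_i(t)\ge c$ for all $i$; upgrading $\liminf$ to a bound valid for all $t\ge0$ then uses continuity and strict positivity of $w$ on the compact initial segment. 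Given permanence, the time-average claim follows from the structure of Lotka--Volterra dynamics: divide the $i$-th equation by $w_i$ to get $(\ln w_i)'=b_i-\sum_j a_{i,j}w_j$, integrate from $0$ to $t$ and divide by $t$; since $w$ is bounded above and bounded away from $0$, the left side $\tfrac1t\ln(w_i(t)/w_i(0))\to0$, so $\tfrac1t\int_0^t(b-Aw(s))\,\mathrm{d}s\to0$, i.e. $A\cdot\overline{w}_t\to b$ where $\overline{w}_t=\tfrac1t\int_0^tw(s)\mathrm{d}s$; inverting the (invertible) matrix $A$ gives $\overline{w}_t\to A^{-1}b=w^*$, which is exactly the stated limit coordinate-by-coordinate.

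The main obstacle I expect is the second bullet for $d\ge11$: one no longer has a genuine strict Lyapunov function, so establishing permanence (the uniform lower bound) honestly requires the average Lyapunov function / Hofbauer--Sigmund machinery, including checking that the boundary dynamics have no interior-facing heteroclinic cycles that could trap orbits. The $d\in\llbracket1,10\rrbracket$ case and the time-average computation (once permanence is known) are comparatively routine — the crux is that the time-average argument uses only the algebraic identity $(\ln w_i)'=b_i-(Aw)_i$ together with boundedness above and below, so it does not need convergence of $w(t)$ itself, and the invertibility of $A$ (already verified in the excerpt) is what converts the vanishing of the averaged right-hand side into $\overline{w}_t\to w^*$.
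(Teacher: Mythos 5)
Your overall strategy coincides with the paper's: the weighted Volterra function $V(w)=\sum_i\lambda_i(w_i-w_i^*-w_i^*\ln(w_i/w_i^*))$ with $\tfrac12(\Lambda A+A^\top\Lambda)$ positive-definite for $d\in\llbracket1,10\rrbracket$ (this is Goh's theorem plus LaSalle), and, for general $d$, permanence via an average Lyapunov function followed by the standard time-average computation $A\cdot\overline{w}_t=b-\varepsilon(t)\to b$ with $A$ invertible. The first bullet and the time-average argument are complete and match the paper.

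The genuine gap is in the permanence step. You correctly identify that the uniform lower bound requires the Hofbauer average-Lyapunov-function machinery, but you then list the hypotheses to be verified (``the boundary faces of the orthant are not chain-recurrent'', ``no interior-facing heteroclinic cycles that could trap orbits'') without verifying any of them; indeed you flag this yourself as ``the main obstacle''. A proof cannot stop there, and checking chain-recurrence or the absence of heteroclinic cycles on the boundary directly would be genuinely delicate for this system. The paper sidesteps all of that by invoking Jansen's sufficient condition for permanence of Lotka--Volterra systems (\cite[Lemma 2]{kirlinger}, originally \cite[Theorem 3]{jansen}): with $P(w)=w_1\cdots w_d$ one has $\tfrac{\mathrm{d}}{\mathrm{d}t}\ln P(w(t))=\Psi(w(t))$ where $\Psi(w)=\sum_i\bigl(b_i-\sum_j a_{i,j}w_j\bigr)=1-(d+1)w_1$, and Jansen's criterion only asks that $\Psi>0$ at every \emph{rest point} of the boundary $\partial(\mathbb{R}_+^d)$. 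That verification is two lines: if $F(w)=0$ and $w_1>0$, then successively $w_2=2w_1,\dots,w_d=dw_1$ and $(d+1)w_1=1$, so $w=w^*$ lies in the open orthant; hence every boundary rest point has $w_1=0$ and $\Psi(w)=1>0$. You should either carry out this check (or an equivalent one) or cite a permanence criterion whose hypotheses you actually verify; as written, the second bullet's lower bound $w_i(t)\geq c$ is asserted rather than proved, and everything downstream (including the $\varepsilon_i(t)\to0$ step in the time-average argument, which needs $w_i$ bounded away from zero) depends on it.
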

\begin{proof}
\begin{itemize}
\item By a theorem of Goh \cite[Theorem 12]{baigent} (see \cite[Theorem 1]{goh} for the original result), it suffices to show that for ${d\in\llbracket1,10\rrbracket}$, there exists $\lambda_1,\dots,\lambda_d>0$ such that the symmetric matrix
\begin{equation}\label{eq:onemillion'}
DA+(DA)^*=\begin{bmatrix}
4\lambda_1&\lambda_2-\lambda_1&\lambda_3&\cdots&\lambda_d\\
\lambda_2-\lambda_1&2\lambda_2&-\lambda_2&&\\
\lambda_3&-\lambda_2&\ddots&\ddots&\\
\vdots&&\ddots&\ddots&-\lambda_{d-1}\\
\lambda_d& &&-\lambda_{d-1}&2\lambda_d
\end{bmatrix}
\end{equation}
is positive-definite, where $D$ is the diagonal matrix with diagonal entries $\lambda_1,\ldots,\lambda_d$, and $(DA)^*$ denotes the transpose of the matrix $DA$.
The idea of Goh's theorem is that the positive-definiteness of $DA+(DA)^*$ entails that
\[V:w\in\left(\mathbb{R}_+^*\right)^d\longmapsto\sum_{i=1}^d\lambda_i\cdot\left(w_i-w_i^*-w_i^*\cdot\ln\left(\frac{w_i}{w_i^*}\right)\right)\]
is a so-called Lyapunov function, in the sense that it has the following properties:
\begin{itemize}
\item We have $V(w)\geq0$ for all $w\in\left(\mathbb{R}_+^*\right)^d$, with equality if and only if $w=w^*$,

\item For every solution $w:\mathbb{R}_+\rightarrow\left(\mathbb{R}_+^*\right)^d$ of \eqref{eq:lotkavolterra}, we have
\[\frac{\mathrm{d}[V(w(t))]}{\mathrm{d}t}=-\left\langle w(t)-w^*,\frac{DA+(DA)^*}{2}\cdot(w(t)-w^*)\right\rangle\quad\text{for all $t\in\mathbb{R}_+$,}\]
and by the positive-definiteness of $DA+(DA)^*$, we have
\[-\left\langle w-w^*,\frac{DA+(DA)^*}{2}\cdot(w-w^*)\right\rangle\leq0\quad\text{for all $w\in\left(\mathbb{R}_+^*\right)^d$,}\]
with equality if and only if $w=w^*$.
\end{itemize}
Thus, by the LaSalle invariance principle (see, e.g, \cite[Theorem 10]{baigent}), we get that for every solution $w:\mathbb{R}_+\rightarrow\left(\mathbb{R}_+^*\right)^d$ of \eqref{eq:lotkavolterra}, we have $w(t)\rightarrow w^*$ as $t\to\infty$, as desired.

Now, it remains to show that for $d\in\llbracket1,10\rrbracket$, there exists ${\lambda_1,\ldots,\lambda_d>0}$ such that the symmetric matrix in \eqref{eq:onemillion'} is positive definite.
By the Sylvester criterion, it suffices to exhibit $\lambda_1,\ldots,\lambda_{10}>0$ such that
\[\Delta_k:=\begin{vmatrix}
4\lambda_1&\lambda_2-\lambda_1&\lambda_3&\cdots&\lambda_k\\
\lambda_2-\lambda_1&2\lambda_2&-\lambda_2&&\\
\lambda_3&-\lambda_2&\ddots&\ddots&\\
\vdots&&\ddots&\ddots&-\lambda_{k-1}\\
\lambda_k& &&-\lambda_{k-1}&2\lambda_k
\end{vmatrix}>0\quad\text{for all $k\in\llbracket1,10\rrbracket$.}\]
For $(\lambda_1,\ldots,\lambda_{10})=(1024,227,118,92,89,97,116,153,232,481)$, we find that
\begin{align*}
\Delta_1&=4096,\\
\Delta_2&=1224375,\\
\Delta_3&=114265104,\\
\Delta_4&=6270814340,\\
\Delta_5&=280372975336,\\
\Delta_6&=12330415584972,\\
\Delta_7&=687248010753336,\\
\Delta_8&=69483419810465760,\\
\Delta_9&=12807765625815100744,\\
\Delta_{10}&=136953089422286895648,
\end{align*}
which completes the proof of the first point.

\item First, let us prove that for every solution $w:\mathbb{R}_+\rightarrow\left(\mathbb{R}_+^*\right)^d$ of \eqref{eq:lotkavolterra}, there exists a constant $c>0$ such that for each $i\in\llbracket1,d\rrbracket$, we have $w_i(t)\geq c$ for all $t\in\mathbb{R}_+$.
For Lotka--Volterra systems, this property is called \emph{permanence}.
By a theorem of Jansen \cite[Lemma 2]{kirlinger} (see \cite[Theorem 3]{jansen} for the original result, see also \cite{hofbauersigmund}), it suffices to show that the function
\[\Psi:w\in\mathbb{R}_+^d\longmapsto\sum_{i=1}^d\left(b_i-\sum_{j=1}^da_{i,j}w_j\right)=1-(d+1)w_1\]
has the following property: for each stationary point $w\in\partial\left(\mathbb{R}_+^d\right)=\left.\mathbb{R}_+^d\middle\backslash\left(\mathbb{R}_+^*\right)^d\right.$ of \eqref{eq:lotkavolterra}, i.e, for each point $w\in\partial\left(\mathbb{R}_+^d\right)$ such that $F(w)=0$, where $F$ is defined in \eqref{eq:F}, we have $\Psi(w)>0$.
The idea of Jansen's theorem is that the assumption on $\Psi$ entails that $P:w\in\mathbb{R}_+^d\mapsto w_1\cdot\ldots\cdot w_d$ is a so-called average Lyapunov function, in the sense that it has the following properties:
\begin{itemize}
\item We have $P(w)\geq0$ for all $w\in\mathbb{R}_+^d$, with equality if and only if $w\in\partial\left(\mathbb{R}_+^d\right)$,

\item For every solution $w:\mathbb{R}_+\rightarrow\mathbb{R}_+^d$ of \eqref{eq:lotkavolterra}, we have
\[\frac{\mathrm{d}[P(w(t))]}{\mathrm{d}t}=\Psi(w(t))\cdot P(w(t))\quad\text{for all $t\in\mathbb{R}_+$,}\]
and the assumption on $\Psi$ entails that if $w(0)\in\partial\left(\mathbb{R}_+^d\right)$, then there exists $\tau>0$ such that
\[\int_0^\tau\Psi(w(t))\mathrm{d}t>0.\]
\end{itemize}
Thus, by Hofbauer's method of average Lyapunov functions \cite[Lemma 1]{kirlinger} (see \cite[Theorem 1]{hofbauer} for the original result), we get the permanence of \eqref{eq:lotkavolterra}, as desired.

To check the assumption on $\Psi$, let $w\in\mathbb{R}_+^d$ be such that ${F(w)=0}$, and suppose that $w_1>0$. Then, since $F_1(w)=0$, we must have $w_2-2w_1=0$, hence $w_2=2w_1>0$.
In turn, since $F_2(w)=0$, we must have $w_3-w_1-w_2=0$, hence $w_3=3w_1>0$, etc.
The argument iterates up to $w_d=dw_1>0$; and finally, since $F_d(w)=0$, we must have $1-w_1-w_d=0$, hence $(d+1)w_1=1$.
This shows that $w=w^*\in\left(\mathbb{R}_+^*\right)^d$. In particular, if $w\in\partial\left(\mathbb{R}_+^d\right)$, then we must have $w_1=0$, hence ${\Psi(w)=1>0}$, as desired.

%We claim that if $w_1>0$, then we must have $w=w^*\in\left(\mathbb{R}_+^*\right)^d$.
%By contraposition, this entails that if $w\in\partial\left(\mathbb{R}_+^d\right)$, then we must have $w_1=0$, hence ${\psi(w)=1>0}$, as desired.
%To prove the claim, suppose that $w_1>0$.
%Then, since $F_1(w)=0$, we must have $w_2-2w_1=0$, hence $w_2=2w_1>0$.
%In turn, since $F_2(w)=0$, we must have $w_3-w_1-w_2=0$, hence $w_3=3w_1>0$, etc.
%The argument iterates up to $w_d=dw_1>0$; and finally, since $F_d(w)=0$, we must have $1-w_1-w_d=0$, hence $(d+1)w_1=1$.
%This shows that $w=w^*\in\left(\mathbb{R}_+^*\right)^d$, which completes the proof of the first assertion.

Now, let us prove the second assertion; namely, that for every solution $w:\mathbb{R}_+\rightarrow\left(\mathbb{R}_+^*\right)^d$ of \eqref{eq:lotkavolterra}, we have
\[\frac{1}{t}\cdot\int_0^tw(s)\mathrm{d}s\longrightarrow w^*\quad\text{as $t\to\infty$.}\]
For each $i\in\llbracket1,d\rrbracket$, integrating $w_i'/w_i=b_i-\sum_{j=1}^da_{i,j}w_j$ from $0$ to $t$, we obtain
\[\ln w_i(t)-\ln w_i(0)=b_i\cdot t-\sum_{j=1}^da_{i,j}\cdot\int_0^tw_j(s)\mathrm{d}s\quad\text{for all $t\in\mathbb{R}_+$,}\]
which can be rearranged as
\[\sum_{j=1}^da_{i,j}\cdot\frac{1}{t}\cdot\int_0^tw_i(s)\mathrm{d}s=b_i-\frac{1}{t}\cdot\ln\left(\frac{w_i(t)}{w_i(0)}\right)\quad\text{for all $t\in\mathbb{R}_+^*$.}\]
Setting, for each $i\in\llbracket1,d\rrbracket$,
\[\overline{w_i}(t)=\frac{1}{t}\cdot\int_0^tw_i(s)\mathrm{d}s\quad\text{and}\quad\varepsilon_i(t)=\frac{1}{t}\cdot\ln\left(\frac{w_i(t)}{w_i(0)}\right)\quad\text{for all $t\in\mathbb{R}_+^*$,}\]
we may rewrite this in matrix notation as $A\cdot\overline{w}(t)=b-\varepsilon(t)$, i.e, $\overline{w}(t)=A^{-1}\cdot(b-\varepsilon(t))$.
Now, since for each $i\in\llbracket1,d\rrbracket$, the function $w_i$ is bounded away from zero and infinity, we get that $\varepsilon_i(t)\rightarrow0$ as $t\to\infty$.
It follows that $\overline{w}(t)\rightarrow A^{-1}\cdot b=w^*$, as desired.
\end{itemize}
\end{proof}

\bibliographystyle{siam}
\bibliography{biblio.bib}

\end{document}